\documentclass{amsart}
\usepackage{amsaddr}
\usepackage{amssymb}
\usepackage{graphicx}
\usepackage{algorithm2e}
\usepackage{times}
\usepackage[all,cmtip]{xy}
\usepackage{tikz-cd}
\usepackage[utf8]{inputenc}
\usepackage{mathrsfs}
\usepackage{enumerate}

\setlength\parindent{0truemm}
\setlength\parskip{2truemm}

\DeclareSymbolFont{bbold}{U}{bbold}{m}{n}
\DeclareSymbolFontAlphabet{\mathbbold}{bbold}

\newcommand{\llangle}{\langle \! \langle}
\newcommand{\rrangle}{\rangle \! \rangle}
\newcommand{\D}{\mathbf{d}}
\newcommand{\R}{\mathbf{r}}

\def\st{\operatorname{star}}

\def\Z{{\mathbb Z}}

\def\curlyR{{\mathcal R}}

\def\curlyP{{\mathcal P}}

\def\P{{\mathcal P}}
\def\ra{\rightarrow}

\def\st{\operatorname{star}}

\def\varep{\varepsilon}

\def\<{\langle}
\def\>{\rangle}
\def\leq{\leqslant}

\def\Sq{\operatorname{Sq}^{\rho}}
\def\coker{\operatorname{coker}}

\def\dom{\mathbf{d}}
\def\ran{\mathbf{r}}

\def\hat{\widehat}
\def\st{\operatorname{star}}

\makeatletter
\newcommand{\eqnum}{\refstepcounter{equation}\textup{\tagform@{\theequation}}}
\makeatother


\newtheorem{thm}{Theorem}[section]
\newtheorem{prop}[thm]{Proposition}
\newtheorem{lemma}[thm]{Lemma}
\newtheorem{cor}[thm]{Corollary}
\newtheorem*{thm*}{Theorem}
\newtheorem*{cor*}{Corollary}

\theoremstyle{plain} 
\newcommand{\thistheoremname}{}
\newtheorem{genericthm}[thm]{\thistheoremname}

\theoremstyle{definition}
\newtheorem{defn}{Definition}[section]
\newtheorem*{defn*}{Definition}
\newtheorem*{prop*}{Proposition}
\newtheorem{ex}[thm]{Example}
\newtheorem*{ex*}{Example}
\newtheorem*{rem*}{Remark}

\numberwithin{equation}{section}

\begin{document}

\title[Groupoids and rewriting]{Groupoids and the algebra of rewriting in group presentations}

\author{N.D. Gilbert and E.A. McDougall}
\address{Department of Mathematics and the Maxwell Institute for the Mathematical Sciences, Heriot-Watt University, Edinburgh, EH14 4AS}
\email{N.D.Gilbert@hw.ac.uk}
\email{eam16@hw.ac.uk}




\subjclass[2010]{Primary: 20F05, Secondary: 20J05, 20L05}

\keywords{presentation, groupoid, crossed module}

\maketitle

\begin{abstract}
Presentations of groups by rewriting systems (that is, by monoid presentations), have been fruitfully studied
by encoding the rewriting system in a $2$--complex --
the Squier complex --
whose fundamental groupoid then describes the derivation of consequences of the rewrite rules.
We describe a reduced form of the Squier complex, investigate the structure of its fundamental groupoid, and show that key properties of the presentation are still encoded in the reduced form.
\end{abstract}



\section*{Introduction}
\label{intro}
The study of the relationships between presentations of semigroups, monoids, and groups, and systems of rewriting rules has drawn together concepts from group and semigroup theory, low-dimensional
topology, and theoretical computer science.  In \cite{Sq}, Squier addressed the question of whether a 
finitely presented monoid with solvable word problem is necessarily presented by a finite, complete,
string rewriting system. He proved that a monoid presented by a finite, complete, string rewriting system must
satisfy the homological finiteness condition $FP_3$: indeed, an earlier result of Anick \cite{An} implies that
such a monoid satisfies the stronger condition $FP_{\infty}$.  These ideas are concisely surveyed by Cohen \cite{Coh},
and more extensively by Otto and Kobayashi in \cite{OK}.
Since examples are known of finitely presented monoids with solvable word problem that do not satisfy $FP_3$,
Squier's work shows that such monoids need not be presented by finite, complete, string rewriting systems.

Squier, Otto and Kobayashi \cite{SOK} studied finite, complete, string rewriting systems for monoids and proved that
the existence of such a system presenting a monoid $M$ implies a homotopical property -- {\em finite
derivation type} -- defined for a graph that encodes the rewriting system.  Moreover, they show that having finite derivation type 
does not depend on the particular rewriting system used to present $M$, and so is a property of $M$ itself and
a necessary condition that $M$ should be presented by a finite, complete string rewriting system.  

Finite derivation type is naturally thought of as a property of a $2$--complex, the {\em Squier complex}  associated to a monoid presentation $\P$, 
and obtained by adjoining certain $2$--cells to the graph of \cite{SOK}.  This point of view was introduced independently by Pride \cite{Pr1} and 
Kilibarda \cite{Ki}, and then extensively developed by Guba and Sapir in terms of both string-rewriting systems \cite{GS1} and more
geometrically, in terms of directed $2$-complexes \cite{GS2}.  The theory developed by Kilibarda and then by Guba and Sapir focusses on
the properties of {\em diagram groups}, which are fundamental groups of the Squier complex.  

Kilibarda \cite{Ki} studied the fundamental groupoid  of the Squier complex associated to a
monoid presentation $[X:R]$.  Gilbert \cite{Gi1} showed that the fundamental groupoid  is a monoid in the category of groupoids, and used this enriched structure to explain Pride's corresponding theory of diagram groups for monoid presentations of groups \cite{Pr2}. 

Pride's approach in \cite{Pr2} is based upon the addition of extra $2$--cells to a Squier complex so as to realise
a homotopy relation introduced by Cremanns and Otto \cite{CrOt}.  This
augmented Squier complex was called the \textit{Pride complex} in \cite{Gi1} and denoted by $K^+$.  Beginning with
a group presentation $\P = \< X : R \>$ of a group $G$,  we obtain a monoid presentation of $G$ by adding relations
$xx^{-1}=1=x^{-1}x$ for each $x \in X$, and the additional $2$--cells correspond to possible overlaps in the 
use of such relations in the free reduction of words on $A= X \cup X^{-1}$.  The outcome is that if $u$ and $v$ are
freely equivalent then any two edge-paths in $K^+$ from $u$ to $v$ that record this free equivalence are fixed-end-point
homotopic, as required for a homotopy relation as defined in \cite{CrOt}.  Gilbert investigated the structure of the fundamental groupoid $\pi(K^+,A^*)$ and showed that there is
a retraction map $\pi(K^+,A^*) \ra \pi(K^+,F(X))$ to the fundamental groupoid with vertex set the free group
$F(X)$.  

In this paper -- which reconfigures the approach to monoid presentation of groups in \cite{CrOt} and is  a somewhat belated sequel to \cite{Gi1,Pr2} -- we adopt a similar approach, but use a different modification of the Squier complex,
defining the \textit{reduced Squier complex} $\Sq(\P)$ of a group presentation $\P = \< X : R \>$ as a $2$--complex having  vertex set $F(X)$.
We can then work directly with the fundamental groupoid $\pi(\Sq(\P),F(X))$ and so avoid some of the technicalities from
\cite{Gi1}.  In particular, we show that the set $\st_1(\Sq(\P))$ of homotopy classes of paths in $\pi(\Sq(\P),F(X))$ that begin  at $1 \in F(X)$ has a natural group structure, and the end-of-path map
\[ \R : \st_1(\Sq(\P)) \ra F(X) \]
is a crossed module, as defined by J.H.C. Whitehead (see \cite{Wh2}).  We give a presentation for $\st_1(\Sq(\P))$,
and use it to show that the crossed module is isomorphic to that usually associated to a group presentation,
as in \cite{BrHu}.  It then follows that the fundamental group $\pi_1(\Sq(P),1)$ can be interpreted as the 
kernel of a free presentation of the relation module of $\P$, and as in \cite{CrOt} we may link the 
module structure of $\pi_1(\Sq(P),1)$ to the homological finiteness condition $\text{FP}_3$, and as in \cite{Dy} to
Cockcroft properties of $\P$.

A version of these results is presented in the second author's PhD thesis at Heriot-Watt University, Edinburgh.  The generous
financial support of a PhD Scholarship from the Carnegie Trust for the Universities of Scotland is duly and gratefully acknowledged.

\section{Background notions and notation}
\subsection{Groupoids}
\label{ord_gpds}
A {\em groupoid} $G$ is a small category in which every morphism
is invertible.  We consider a groupoid as an algebraic structure
as in \cite{HiBook}: the elements are the morphisms, and
composition is an associative partial binary operation.
The set of vertices of $G$ is denoted $V(G)$, and for each vertex $x \in V(G)$ there exists an identity morphism $1_x$.
An element $g \in G$ has domain
$g \dom$ and range $g\ran$ in $V(G)$, with $gg^{-1} = 1_{g \dom}$ and $g^{-1}g = 1_{g \ran}$.  
For $e \in V(G)$ the {\em star} of $e$ in $G$ is the set
$\st_{e}(G) = \{ g \in G : g \dom =e \}$, and the {\em local group} at $e$ is the set
$G(e) = \{ g \in G : g \dom = e = g \ran \}$.  

\subsection{Crossed modules}
\label{intro_cm}
Crossed modules will be the algebraic models of group presentations
that we shall use in our formulation of the relation module and the module of identities for a group presentation.  For a more detailed account of these topics , we refer to \cite{BrHu}.

A \textit{crossed module} is a group homomorphism $\partial : T \ra \Gamma$ together with an action of $\Gamma$ on $T$
(written $(t,g) \mapsto t^g$) such that $\partial$ is $\Gamma$--equivariant, so that for all $t \in T$ and $g \in \Gamma$ we have
\begin{equation} \label{cm1}
(t^g) \partial = g^{-1} (t \partial) g \,. 
\end{equation}
and such  that for all $t,u \in T$, we have:
\begin{equation} \label{cm2}
t^{u \partial} = u^{-1}tu \,. 
\end{equation}
We shall say that $(T, \partial)$ is a \textit{crossed $\Gamma$--module}.

\begin{ex}
Examples of crossed modules include the following:
\leavevmode
\begin{itemize}
\item any $\Gamma$--module $M$ with the trivial map $M \stackrel{0}{\ra} \Gamma$,
\item the inclusion of any normal subgroup $N \hookrightarrow \Gamma$,
\item the map $T \ra \operatorname{Aut} T$ that associates to $t \in T$ the inner automorphism of $T$ defined by
$a \mapsto t^{-1}at$,
\item any surjection $T \ra \Gamma$ with central kernel, where $\Gamma$ acts on $T$ by lifting and conjugation,
\item the boundary map $\pi_2(X,Y) \ra \pi_1(Y)$ from the second relative homotopy group of a pair of spaces $(X,Y)$
with $Y \subseteq X$.
\end{itemize}
\end{ex}
The last example motivated the introduction of  crossed modules  by J.H.C. Whitehead \cite{Wh2}.  

Let $\partial : T \ra \Gamma$ be a crossed module, and let $N$ be the image of $\partial$.  The following properties are easy consequences of
\eqref{cm1} and \eqref{cm2}.
\begin{itemize}
\item $N$ is normal in $\Gamma$, and so if we set $G= \Gamma /N$ we get the short exact sequence of groups:
$1 \rightarrow N \rightarrow \Gamma \rightarrow G \rightarrow 1 \,.$
\item $\ker \partial \subseteq Z(T)$, the center of $T$, so $\ker \partial$ is abelian.
\item $\ker \partial$ is invariant under the $\Gamma$--action on $T$, and so is a $\Gamma$--module.
\item $N$ acts trivially on $Z(T)$ and thus on $\ker \partial$, hence $\ker \partial$ inherits an action of $G$ to become a $G$--module.
\item the abelianisation $T^{ab}$ of $T$ inherits the structure of a $G$-module.
\end{itemize}

\begin{defn}
Let $(T, \partial)$ and $(T', \partial')$ be crossed $\Gamma$-modules. A morphism $\phi:(T, \partial) \rightarrow (T', \partial')$  is a group homomorphism  $\phi: T \rightarrow T'$ such that for $t \in T$, and $g \in \Gamma$, $ (t^g)\phi = (t \phi)^g$ and $\phi \partial' = \partial$.
\end{defn}

\subsubsection{Free crossed modules}
\label{free_cm_sec}
\begin{defn}
Let $(T, \partial)$ be a crossed $\Gamma$-module , let $R$ be a set, and let $\rho :R \rightarrow T$ be a function.
We say $(T, \partial)$ is a \textit{free crossed $\Gamma$-module with basis $\rho$} if for any crossed $\Gamma$-module $(T', \partial')$ and function $\sigma: R \rightarrow T'$ such that $\sigma \partial' = \rho \partial$, that is, such that the square
$$\xymatrix{
R \ar[d]_{\sigma} \ar[r]^{\rho} &T \ar[d]^{\partial}\\
T' \ar[r]_{\partial'} &\Gamma}$$
commutes, then there exists a unique morphism of crossed modules $\phi:(T, \partial) \rightarrow (T', \partial')$ such that $\rho \phi = \sigma$, that is,
\[ \xymatrixcolsep{3pc}\xymatrix{
R \ar[d]_{\sigma} \ar[r]^{\rho} & T \ar[d]^{\partial} \ar@{-->}[dl]_{\phi}\\
T' \ar[r]_{\partial'} &\Gamma}\]
also commutes.  We may also choose to emphasise $\omega = \rho \partial : R \rightarrow \Gamma$ by saying that a free crossed module $(T, \partial)$ with basis $\rho$ is a \textit{free crossed module on $\omega$}.
\end{defn}
The construction of free crossed modules is due to Whitehead \cite{Wh2}, and is also discussed in \cite{BrHu}.

\begin{prop}{\cite{Wh2}}
 \label{fre_cm_exist}
Let $\Gamma$ be a group, $R$ a set, and $\omega :R \rightarrow \Gamma$ a function. Then a free crossed $\Gamma$-module on $\omega$ exists and is unique up to isomorphism.
\end{prop}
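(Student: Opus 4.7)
The plan has two parts: uniqueness from the universal property, then an explicit construction. Uniqueness is the standard categorical argument: if $(T,\partial)$ and $(T',\partial')$ are both free on $\omega$ with bases $\rho,\rho'$, then the universal property of $(T,\partial)$ applied to $\rho'$ gives a morphism $\phi:(T,\partial)\to(T',\partial')$ with $\rho\phi=\rho'$, and symmetrically a morphism $\psi$ back; the compositions $\phi\psi$ and $\psi\phi$ must agree with the identity morphisms by the uniqueness clauses applied to $\sigma=\rho$ and $\sigma=\rho'$.

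For existence I would follow Whitehead's construction. Let $F$ be the free group on the set $R\times\Gamma$, give $F$ the $\Gamma$-action generated on free generators by $(r,g)^h=(r,gh)$, and define $\delta:F\to\Gamma$ on generators by $(r,g)\delta=g^{-1}(r\omega)g$. Routine checks show that $\delta$ is a $\Gamma$-equivariant group homomorphism, so the analogue of \eqref{cm1} already holds on $F$. To impose the Peiffer identity \eqref{cm2}, let $P\triangleleft F$ be the normal subgroup generated by the Peiffer elements $y^{x\delta}x^{-1}y^{-1}x$ for $x,y\in F$; set $T=F/P$, let $\partial$ be the map induced by $\delta$, and define $\rho:R\to T$ by $r\mapsto(r,1)P$. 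A short calculation using equivariance of $\delta$ shows that $P$ is $\Gamma$-invariant, so $T$ inherits the $\Gamma$-action and $\partial$ inherits equivariance; \eqref{cm2} holds in $T$ by construction.

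For the universal property, given a crossed $\Gamma$-module $(T',\partial')$ and $\sigma:R\to T'$ with $\sigma\partial'=\omega$, I would extend $\sigma$ to a homomorphism $\tilde\sigma:F\to T'$ by sending each generator $(r,g)$ to $(r\sigma)^g$. The key verification is that $\tilde\sigma$ is $\Gamma$-equivariant and kills every Peiffer element, the latter using \eqref{cm2} for $(T',\partial')$ together with the identity $\tilde\sigma\partial'=\delta$ checked first on generators. The induced morphism $\phi:(T,\partial)\to(T',\partial')$ then satisfies $\rho\phi=\sigma$, and $\phi$ is forced to be unique because the cosets $(r,g)P$ generate $T$ as a group and the two compatibility conditions pin down $\phi$ on each such generator.

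The main obstacle, in my view, is not any single step but the bookkeeping around the Peiffer subgroup: verifying simultaneously that $P$ is normal and $\Gamma$-invariant in $F$, and that $\tilde\sigma$ vanishes on $P$, is where the interplay between the group law on $F$, the $\Gamma$-action, and the boundary $\delta$ must be handled most carefully.
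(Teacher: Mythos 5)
Your proposal is correct and follows essentially the same route as the paper: the free group on $R\times\Gamma$ with action $(r,u)^v=(r,uv)$, the boundary $(r,u)\mapsto u^{-1}(r\omega)u$, and the quotient by the Peiffer subgroup, with uniqueness by the standard universal argument. The only cosmetic difference is that you take the normal closure of all Peiffer commutators $y^{x\delta}x^{-1}y^{-1}x$ while the paper takes the subgroup generated by the Peiffer elements on the free generators alone (and asserts its normality and $\Gamma$-invariance); these coincide, and your verification that $\tilde\sigma$ kills them via \eqref{cm2} in the target is exactly the needed point.
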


\begin{proof}
We sketch the construction, following \cite[Proposition 5]{BrHu}.
Let $F$ be the free group on the basis $R \times \Gamma$.  Then $\Gamma$ acts on $F$ by right multiplication of basis 
elements: for $r \in R$ and $u,v \in \Gamma$ we have $(r,u)^v=(r,uv)$.  We map $(r,u) \mapsto u^{-1}(r \omega)u$ and this
induces a group homomorphism $\delta : F \ra \Gamma$.  The subgroup $P$ of $F$ generated by all elements of the form
\[ (r,u)^{-1}(s,v)^{-1}(r,u)(s,vu^{-1}(r\omega)u) \]
with $r,s \in R$ and $u,v \in \Gamma$ is normal in $F$, invariant under the $\Gamma$--action, and contained in 
$\ker \delta$.   It follows that $\delta$ induces $\partial : F/P \ra \Gamma$, and this is a free crossed $\Gamma$--module on
$\omega$.  Uniqueness up to isomorphism follows from the usual universal argument.
\end{proof}

Whitehead also observed the following:

\begin{prop}{\cite[page 457]{Wh2}}
\label{free_cm_free_mod}
Let $(C, \partial)$ be the free crossed $\Gamma$-module with basis $\rho$, and set $Q = \coker \partial$.  Then $C^{ab}$ is a 
free $Q$-module on the image of the composition $\overline{\rho}: R \xrightarrow{\rho} C \rightarrow C^{ab}$.
\end{prop}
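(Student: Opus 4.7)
The plan is to exploit the explicit construction of $(C, \partial)$ from the proof of Proposition \ref{fre_cm_exist} and compute $C^{ab}$ directly, reading off that its basis as a $Q$-module is the image of $\overline{\rho}$.

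Recall $C = F/P$, where $F$ is the free group on $R \times \Gamma$ with $\Gamma$-action $(r,u)^v = (r, uv)$, and $P$ is the smallest $\Gamma$-invariant normal subgroup of $F$ containing the elements
\[ w_{r,s,u,v} = (r,u)^{-1}(s,v)^{-1}(r,u)\bigl(s,\, vu^{-1}(r\omega)u\bigr). \]
Abelianising, $C^{ab} = F^{ab}/\overline{P}$ where $F^{ab}$ is free abelian on $R \times \Gamma$. A short calculation shows that the $\Gamma$-action on $F$ simply permutes the $w_{r,s,u,v}$ among themselves (one verifies $w_{r,s,u,v}^g = w_{r,s,ug,vg}$), and conjugation in $F$ becomes trivial after abelianisation, so $\overline{P}$ is the $\Z$-span of the abelianised generators
\[ (s,\, vu^{-1}(r\omega)u) - (s, v). \]

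As $r$ and $u$ vary over $R$ and $\Gamma$ the elements $u^{-1}(r\omega)u$ run through a generating set for $N = \im \partial$; writing a general $n \in N$ as a product of such conjugates and telescoping, I find that $\overline{P}$ equals the $\Z$-span of $\{(s, vn) - (s, v) : s \in R,\; v \in \Gamma,\; n \in N\}$. Hence $C^{ab}$ is the free abelian group on the pairs $(s, Nv) \in R \times Q$, and the residual $\Gamma$-action descends to a $Q$-action by right translation on the $Q$-coordinate. This is precisely the free $Q$-module on $R$, with the generator indexed by $r$ corresponding to the class of $(r,1)$, namely $\overline{\rho}(r)$.

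The one piece of genuine bookkeeping is the two-sided identification of $\overline{P}$ with the relations $(s,v) \sim (s, vn)$ for $n \in N$: one inclusion is immediate from the form of the defining generators, while the other uses nothing beyond the telescoping sum above together with the fact that the conjugates of the $r\omega$ generate $N$. Once this is in place, the free $Q$-module structure is read off directly from the resulting description of $C^{ab}$.
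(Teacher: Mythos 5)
Your proof is correct. Note that the paper itself gives no argument for this proposition --- it is simply cited from Whitehead --- so there is no internal proof to compare against; your direct computation from the explicit construction in Proposition \ref{fre_cm_exist} is a legitimate and self-contained way to establish it. The key points all check out: since the Peiffer elements generate $P$ as a normal, $\Gamma$-invariant subgroup (and the $\Gamma$-action permutes them via $w_{r,s,u,v}^g = w_{r,s,ug,vg}$), the image $\overline{P}$ in the free abelian group $F^{ab}$ on $R \times \Gamma$ is exactly the $\Z$-span of the elements $(s, vu^{-1}(r\omega)u) - (s,v)$; inverting and substituting gives the relations $(s,vc^{-1})-(s,v)$ as well, and the telescoping sum then identifies $\overline{P}$ with the span of $\{(s,vn)-(s,v) : n \in N\}$, so that $C^{ab}$ is free abelian on $R \times Q$ with the evident $Q$-action. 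This also yields, as a byproduct, that $\overline{\rho}$ is injective, which is implicitly needed for ``free on the image of $\overline{\rho}$'' to carry its intended meaning. The one cosmetic slip is the phrase ``conjugation in $F$ becomes trivial after abelianisation, so $\overline{P}$ is the $\Z$-span of the abelianised generators'': the relevant fact is simply that the image in $F^{ab}$ of the subgroup generated by a set is the $\Z$-span of the images of that set, and $\Gamma$-invariance costs nothing because the generating set is closed under the action; conjugation-invariance is not separately needed since the generating set already generates a normal subgroup. This does not affect the argument.
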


\subsubsection{Crossed modules from group presentations}
\label{cm_from_pres}
A group \textit{presentation}  $\mathcal{P}= \< X : \mathcal{R} \>$ of a group $G$, consists of a set of \textit{generators} $X$, and a set of \textit{relations} $\mathcal{R} \subseteq (X \cup X^{-1})^* \times (X \cup X^{-1})^*$.  We set $A = X \cup X^{-1}$, we let $\rho : A^* \ra F(X)$
be the canonical map, and we define $\hat{\rho} : \curlyR \ra F(X)$ by $(\ell,r) \hat{\rho} = (\ell^{-1}r)\rho$.
We let $R$ be the image of $\hat{\rho}$ in $F(X)$, and define $N = \llangle R \rrangle$ to be
the normal closure of $R$ in $F$, so that  a typical element of $N$ has the form
\[ u_1^{-1}(r_1 \hat{\rho})^{\varep_1} u_1 \dotsm u_k^{-1}(r_k \hat{\rho})^{\varep_k}u_k \,, \] 
where, for $1 \leq j \leq k$, we have  $u_j \in F$, $r_j \in \curlyR$, and $\varep_j = \pm 1$.
Then $G$ is the quotient group $F(X)/N$, and we have a canonical \textit{presentation map} $\theta : F(X) \ra G$.

 We now let $(C({\mathcal P}),\partial)$ be the free crossed $F(X)$--module on the function $\hat{\rho} : {\mathcal R} \ra F(X)$. 
An element of $C = C({\mathcal P})$ is represented by a product
\[ (r_1,w_1)^{\varep_1} \dotsm (r_k,w_k)^{\varep_k} \]
where $r_j \in {\mathcal R}, w_j \in F(X)$ and $\varep_k = \pm 1$.  A typical Peiffer element (trivial in $C$) has the form
\[ (r,u)^{-1}(s,v)^{-1}(r,u)(s,vu^{-1}(r \hat{\rho})u) \,. \]
For $(r,w) \in C$ we have $\partial : (r,w) \mapsto w^{-1}(r \hat{\rho}) w$, and the image of $\partial$ is $N$.  We denote $\ker \partial$ by
$\pi =\pi({\mathcal P})$.  We therefore have short exact sequences of groups
\begin{equation}
\label{pres_ses}
1 \ra N \ra F(X) \ra G \ra 1 
\end{equation}
and
\begin{equation}
\label{free_cm_ses}
0 \ra \pi({\mathcal P}) \ra C({\mathcal P}) \ra F(X) \ra 1 \,,
\end{equation}
with $\pi$ central in $C$ and a $G$--module.  

\begin{prop}{\cite[Corollary to Proposition 7]{BrHu}} \label{pres_module} 
The free crossed module $C$ is isomorphic as a group to $\pi \times N$.  Its abelianisation $C^{ab}$ is a free 
$G$--module, and the induced map $\pi \ra C^{ab}$ is injective, so that 
we have a short exact sequence of $G$--modules.
\begin{equation} \label{pres_module_ses}
0 \rightarrow \pi \rightarrow C^{ab} \rightarrow N^{ab} \rightarrow 0
\end{equation}
\end{prop}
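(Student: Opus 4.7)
The plan is to obtain $C \cong \pi \times N$ by splitting the short exact sequence
\[ 1 \to \pi \to C \xrightarrow{\partial} N \to 1, \]
to read off the assertions about $C^{ab}$ after abelianising, and to check $G$--equivariance of the resulting sequence separately.

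For the first claim, I would use that $N \leq F(X)$ is free by the Nielsen--Schreier theorem. The surjection $\partial : C \to N$ therefore admits a group-theoretic section: choose a preimage in $C$ for each element of a free basis of $N$ and extend multiplicatively, obtaining a homomorphism $s : N \to C$ with $\partial \circ s$ the identity on $N$. Hence $C \cong \pi \rtimes s(N)$; and because $\pi = \ker \partial$ is central in $C$ (one of the bulleted consequences of \eqref{cm1}--\eqref{cm2} recorded above), the semidirect product collapses to the direct product $C \cong \pi \times N$.

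The second claim is immediate from Proposition \ref{free_cm_free_mod}: since $\coker \partial = F(X)/N = G$, the abelianisation $C^{ab}$ is free as a $G$--module, on the image of $\curlyR$. For the third, I would abelianise the decomposition from the previous step; since $\pi$ is already abelian, this yields $C^{ab} \cong \pi \oplus N^{ab}$ as abelian groups, under which the induced map $\pi \to C^{ab}$ becomes the inclusion of the first summand and is therefore injective, with cokernel $N^{ab}$. Passing $\partial$ to abelianisations then produces the sequence $0 \to \pi \to C^{ab} \to N^{ab} \to 0$.

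To promote this to a sequence of $G$--modules, I would verify equivariance of both maps. The inclusion $\pi \hookrightarrow C$ and the map $\partial : C \to N$ are $F(X)$--equivariant (the first tautologically, the second by \eqref{cm1}), and the $F(X)$--actions on the targets $C^{ab}$ and $N^{ab}$ factor through $G$: axiom \eqref{cm2} shows that the action of any $n = c\partial \in N$ on $t \in C$ is conjugation by $c$, which is trivial modulo commutators, and similarly $N$ acts trivially on $N^{ab}$. The main obstacle is conceptual rather than computational: the splitting used in the first step is \emph{not} $F(X)$--equivariant, so the $G$--module structure of the exact sequence cannot be read off the direct product decomposition of $C$; it must instead be extracted from Whitehead's freeness theorem together with the equivariance of the maps $\pi \hookrightarrow C \xrightarrow{\partial} N$.
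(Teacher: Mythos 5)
Your proof is correct and follows essentially the same route as the paper's: split the central extension $1 \to \pi \to C \to N \to 1$ using freeness of the quotient, deduce $C \cong \pi \times N$ from centrality of $\pi$, abelianise to obtain injectivity of $\pi \to C^{ab}$ with cokernel $N^{ab}$, and invoke Proposition \ref{free_cm_free_mod} for freeness of $C^{ab}$. If anything, your version is slightly more careful, since you correctly identify that the group whose freeness (via Nielsen--Schreier) provides the splitting is $N = \im \partial$, whereas the paper's terse wording (``since $F$ is free'') and the form of \eqref{free_cm_ses} conflate $N$ with $F(X)$.
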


\begin{proof}
Since $F$ is free, \eqref{free_cm_ses} splits, and since $\pi$ is central in $C$ we have $C \cong \pi \times F$.  It follows that
$[C,C] \cong \{ 0 \} \times [F,F]$ and so $\pi \ra C^{ab}$ is injective. $C^{ab}$ is free by Proposition \ref{free_cm_free_mod}.
\end{proof}

In the sequence \eqref{pres_module_ses}, the $G$--module $N^{ab}$ is the \textit{relation module} of the presentation
$\curlyP$, and the $G$--module $\pi$ is the \textit{module of identities}.  The sequence \eqref{pres_module_ses} then gives
a free presentation of the relation module.

\section{Regular groupoids}
We now introduce some additional structure on a groupoid.  This idea originates in work of Brown and Gilbert \cite{BrGi},
and was further developed by Gilbert in \cite{Gi1} and by Brown in \cite{RBr}.  Brown uses the terminology
\textit{whiskered} groupoid for what Gilbert had called a \textit{semiregular} groupoid.  We shall use the semiregular
terminology, and will discuss in detail the special case of \textit{regular} groupoids.

\begin{defn} \label{reg_cond}
Let $G$ be a groupoid, with vertex set $V(G)$ and domain and range maps $\D, \R:G \rightarrow V(G)$. 
Then $G$ is \textit{semiregular} if
\begin{itemize}
\item $V(G)$ is a monoid, with identity $e \in V(G)$,
\item there are left and right actions of $V(G)$ on $G$, denoted $x \rhd \alpha$, $\alpha \lhd x$, which for all $x, y \in V(G)$ and $\alpha, \beta \in G$ satisfy:
\begin{enumerate}[(a)]
\item $(xy) \rhd \alpha = x \rhd (y \rhd \alpha)$; $\alpha \lhd (xy) = (\alpha \lhd x) \lhd y$; $(x\rhd \alpha) \lhd y = x \rhd (\alpha \lhd y)$,
\item $e \rhd \alpha = \alpha = \alpha \lhd e$,
\item $(x \rhd \alpha) \D = x(\alpha \D)$; $(\alpha \lhd x)\D = (\alpha \D)x$; $(x \rhd \alpha) \R = x(\alpha \R)$; $(\alpha \lhd x)\R = (\alpha \R)x$,
\item $x \rhd (\alpha \circ \beta) = (x \rhd \alpha) \circ (x \rhd \beta)$; $(\alpha \circ \beta) \lhd x = (\alpha \lhd x) \circ (\beta \lhd x)$, whenever $\alpha \circ \beta$ is defined,
\item $x \rhd 1_y = 1_{xy} = 1_x \lhd y$.
\end{enumerate}
\end{itemize}
A semiregular groupoid $G$ is a \textit{regular} groupoid if $V(G)$ is a group.
\end{defn}

Our first result collates some simple facts from \cite[section 1]{Gi1}.

\begin{prop}
 \label{star op} 
\leavevmode
\begin{enumerate}[(a)]
\item Let $G$ be a semiregular groupoid.  Then there are two everywhere defined binary operations on $G$ given by:
\begin{align*} \alpha * \beta &= (\alpha \lhd \beta \D) \circ (\alpha \R \rhd \beta) \\
\alpha \circledast \beta &= (\alpha \D \rhd \beta) \circ (\alpha \lhd \beta \R) \,.
\end{align*}
Each of the binary operations $*$ and $\circledast$ make $G$ into a monoid, with identity $1_e$.
\item The binary operation $*$ and the monoid structure on $V(G)$ make the semiregular groupoid $G$ into a strict monoidal groupoid if and only if the operations $*$ and $\circledast$ on $G$ coincide.
\item Let $G$ be a regular groupoid.   Then each of the two binary operations $\ast$ and $\circledast$ given in Proposition \ref{star op} make  $G$ into a group, with identity $1_e$.
\item Let $G$ be a regular groupoid. Then $\R : (G, \ast) \ra V(G)$ is a group homomorphism, and
$\st_e(G)$ is a subgroup of $(G,\ast)$.
\end{enumerate}
\end{prop}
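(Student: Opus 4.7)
The plan is to verify the four parts in sequence; parts (a) and (b) carry essentially all the bookkeeping, while (c) and (d) drop out once (a) is settled.

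For part (a), I first check that the composition $(\alpha \lhd \beta\D) \circ (\alpha\R \rhd \beta)$ defining $\alpha \ast \beta$ is well formed: axiom (c) gives $(\alpha \lhd \beta\D)\R = (\alpha\R)(\beta\D) = (\alpha\R \rhd \beta)\D$, so the $\circ$--composite makes sense. The identities $1_e \ast \alpha = \alpha = \alpha \ast 1_e$ follow directly from axioms (b) and (e). For associativity I would expand both $(\alpha \ast \beta) \ast \gamma$ and $\alpha \ast (\beta \ast \gamma)$, using (d) to distribute actions through $\circ$, (a) to combine iterated actions, and (c) to compute endpoints; both sides reduce to
\[
(\alpha \lhd (\beta\D)(\gamma\D)) \circ (\alpha\R \rhd (\beta \lhd \gamma\D)) \circ (\alpha\R \rhd (\beta\R \rhd \gamma))
\]
and so agree by associativity of $\circ$. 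The argument for $\circledast$ is symmetric.

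For part (b), a strict monoidal structure with $\ast$ as tensor product amounts to requiring that $\ast$ be a functor, i.e.\ the interchange law
\[
(\alpha \circ \beta) \ast (\gamma \circ \delta) = (\alpha \ast \gamma) \circ (\beta \ast \delta)
\]
whenever both sides are defined (the unit conditions $1_x \ast 1_y = 1_{xy}$ are part of axiom (e)). Expanding both sides with (d) and (c), and using $\alpha\R = \beta\D$, $\gamma\R = \delta\D$, the outer factors $(\alpha \lhd \gamma\D)$ and $(\beta\R \rhd \delta)$ coincide, leaving the middle condition
\[
(\alpha\R \rhd \gamma) \circ (\beta \lhd \delta\D) = (\beta \lhd \gamma\D) \circ (\beta\R \rhd \gamma),
\]
which is precisely $\beta \circledast \gamma = \beta \ast \gamma$.

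For part (c), the group hypothesis on $V(G)$ lets me write down the candidate two--sided inverse
\[
\bar\alpha := (\alpha\R)^{-1} \rhd \alpha^{-1} \lhd (\alpha\D)^{-1}.
\]
Axiom (c) gives $\bar\alpha\D = (\alpha\D)^{-1}$ and $\bar\alpha\R = (\alpha\R)^{-1}$, so the endpoints match. A direct calculation using axioms (a) and (b) collapses $\alpha \ast \bar\alpha$ to $(\alpha \lhd (\alpha\D)^{-1}) \circ (\alpha^{-1} \lhd (\alpha\D)^{-1})$; axiom (d) then folds this to $(\alpha \circ \alpha^{-1}) \lhd (\alpha\D)^{-1} = 1_{\alpha\D} \lhd (\alpha\D)^{-1} = 1_e$ by axiom (e), and the symmetric calculation gives $\bar\alpha \ast \alpha = 1_e$. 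The treatment for $\circledast$ is dual.

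Part (d) is then immediate: axiom (c) shows $(\alpha \ast \beta)\R = (\alpha\R \rhd \beta)\R = (\alpha\R)(\beta\R)$, so $\R$ is a group homomorphism on $(G, \ast)$. Likewise $(\alpha \ast \beta)\D = (\alpha\D)(\beta\D)$, so $\D$ is also a homomorphism and $\st_e(G) = \D^{-1}(e)$ is its kernel, hence a subgroup of $(G, \ast)$. The only real obstacle in the whole argument is keeping the actions straight under expansion in parts (a) and (b); no creative input beyond the defining axioms is required.
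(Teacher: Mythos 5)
Your proof is correct and follows essentially the same route as the paper: the paper's proof only records the explicit $\ast$- and $\circledast$-inverses for part (c) (which match your $\bar\alpha$ exactly, with $\alpha^{-1}$ the groupoid inverse) and defers the remaining verifications to the cited earlier work, where they are carried out by the same direct axiom-chasing you perform here. Your reduction of part (b) to the interchange law, and hence to the middle condition $\beta \ast \gamma = \beta \circledast \gamma$, is the standard argument and is sound (padding with identity morphisms shows every pair occurs as the middle of a composable configuration).
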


\begin{proof}  We remark only on the proof of (c), since it is mis-stated in \cite{Gi1}.
The inverse of $\alpha$ with respect to $*$ is
$$\alpha^* = \alpha \R^{-1} \rhd \alpha^{\circ} \lhd \alpha \D^{-1}$$
and with respect to $\circledast$ is
$$\alpha^{\circledast} = \alpha \D^{-1} \rhd \alpha^{\circ} \lhd \alpha \R^{-1}$$
where $^{\circ}$ is the inverse of $\alpha$ with respect to the groupoid operation, and $^{-1}$ is the inverse in the group $V(G)$.
\end{proof}

\begin{defn}
In view of part (c) of Proposition \ref{star op}, we say that a semiregular groupoid is  \textit{monoidal} if
the operations $*$ and $\circledast$ coincide.  (Brown \cite{RBr} calls such semiregular groupoids \textit{commutative}.)
\end{defn}

Still following \cite[section 1]{Gi1}, we state the connection between regular groupoids and crossed modules.

\begin{prop}
\label{reg_gpd_is_cm}
In a regular groupoid $G$, the group $(G,\ast)$ admits a group
action of $V(G)$ by automorphisms, defined for $\alpha \in G$ and $q \in V(G)$ by $\alpha^q = q^{-1} \rhd \alpha \lhd q$.
Then $\R : \st_e(G) \ra V(G)$
is a crossed module if and only if $G$ is monoidal.  
\end{prop}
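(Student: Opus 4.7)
The plan is to show first that (cm1) is automatic, then to reduce (cm2) to a clean interchange identity equivalent to $\ast = \circledast$ on $\st_e(G)$, and finally to promote that equivalence from $\st_e(G)$ to all of $G$; the last step is where the real work lies. Condition (cm1) holds unconditionally because axiom (c) of Definition~\ref{reg_cond} gives $(q^{-1} \rhd \alpha \lhd q)\R = q^{-1}(\alpha\R)q$ for any $\alpha \in \st_e(G)$ and $q \in V(G)$, so the proposition reduces to the equivalence of (cm2) with the monoidal condition.

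The next step is a computation that translates the Peiffer identity into the language of $\circ$. For $\alpha, \beta \in \st_e(G)$ with $\alpha\R = p$ and $\beta\R = q$, Proposition~\ref{star op}(c) gives $\beta^\ast = q^{-1} \rhd \beta^\circ$, and axiom (d) then yields
\[ \beta^\ast \ast \alpha \ast \beta \;=\; \bigl(q^{-1} \rhd (\beta^\circ \circ \alpha)\bigr) \circ \bigl((q^{-1}p) \rhd \beta\bigr). \]
Equating this with $\alpha^{\beta\R} = q^{-1} \rhd \alpha \lhd q$, applying $q \rhd (-)$, and composing with $\beta$ on the left to cancel $\beta \circ \beta^\circ = 1_e$, the Peiffer identity collapses to
\[ \alpha \circ (p \rhd \beta) \;=\; \beta \circ (\alpha \lhd q), \]
which is exactly $\alpha \ast \beta = \alpha \circledast \beta$. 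So (cm2) is equivalent to $\ast = \circledast$ restricted to $\st_e(G)$, and the forward direction of the proposition is immediate.

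The converse is the main obstacle, since $\ast = \circledast$ must be proved on all of $G$ whereas Peiffer only provides equality on $\st_e(G)$. My plan is to reduce the general case to the star case using the canonical decomposition $\alpha = x \rhd \alpha^\bullet$ with $x = \alpha\D$ and $\alpha^\bullet = x^{-1} \rhd \alpha \in \st_e(G)$, and similarly $\beta = y \rhd \beta^\bullet$. Expanding both sides of $\alpha \ast \beta = \alpha \circledast \beta$ using the semiregular axioms and factoring out a common left action by $x$, what remains to prove is
\[ (\alpha^\bullet \lhd y) \circ ((x'y) \rhd \beta^\bullet) \;=\; (y \rhd \beta^\bullet) \circ (\alpha^\bullet \lhd yy'), \]
where $x' = \alpha^\bullet\R$ and $y' = \beta^\bullet\R$. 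The key trick I expect to use is to apply the already-established star-case equality not to $(\alpha^\bullet, \beta^\bullet)$ directly but to the conjugated pair $(\alpha^\bullet, \; y \rhd \beta^\bullet \lhd y^{-1})$, which still lies in $\st_e(G)$, and then right-act by $y$, letting axioms (a), (b) and (e) absorb the inner $y^{-1}y$ factors and produce the displayed identity.
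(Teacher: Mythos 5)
Your argument is correct and complete in substance. Note first that the paper gives no proof of this proposition at all: it is quoted from Gilbert's earlier work \cite{Gi1}, so your write-up supplies an argument the present text omits rather than competing with one. Your reduction of (cm2) checks out: for $\alpha,\beta\in\st_e(G)$ with $p=\alpha\R$, $q=\beta\R$, the identity $\alpha^{q}=\beta^{\ast}\ast\alpha\ast\beta=(q^{-1}\rhd(\beta^{\circ}\circ\alpha))\circ((q^{-1}p)\rhd\beta)$ is converted by the invertible operations $q\rhd(-)$ and left composition with $\beta$ into $\beta\circ(\alpha\lhd q)=\alpha\circ(p\rhd\beta)$, which is literally $\alpha\circledast\beta=\alpha\ast\beta$ because $\alpha\D=\beta\D=e$ kills the extra whiskers; both steps are reversible, so (cm2) is indeed equivalent to the interchange law on $\st_e(G)$, giving the easy direction. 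The genuinely nontrivial step, promoting $\ast=\circledast$ from $\st_e(G)$ to all of $G$, is where your conjugation trick earns its keep, and it does work: with $\alpha=x\rhd\alpha^{\bullet}$, $\beta=y\rhd\beta^{\bullet}$, factoring $x\rhd(-)$ out of both products reduces the claim to $(\alpha^{\bullet}\lhd y)\circ((x'y)\rhd\beta^{\bullet})=(y\rhd\beta^{\bullet})\circ(\alpha^{\bullet}\lhd yy')$, and applying the star-case equality to the pair $(\alpha^{\bullet},\,y\rhd\beta^{\bullet}\lhd y^{-1})$ --- which lies in $\st_e(G)$ since $(y\rhd\beta^{\bullet}\lhd y^{-1})\D=yey^{-1}=e$ --- followed by $(-)\lhd y$ yields exactly that identity, the $\lhd y^{-1}y$ factors collapsing by axioms (a), (b) and the distributivity (d). The only items left implicit are the routine verifications that $\alpha\mapsto q^{-1}\rhd\alpha\lhd q$ preserves $\st_e(G)$ and is an automorphism of $(G,\ast)$; since the proposition's statement already posits the action, this is a harmless omission, but it would be worth one sentence.
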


\section{The Squier complex of a group presentation}
Let $\curlyP= \langle X: \curlyR \rangle$ be a group presentation.  Recall from section \ref{cm_from_pres} that relations
$(l,r) \in \curlyR$ may involve words in $(X \cup X^{-1})^*$ that are not freely reduced.  However, to reduce notational
clutter, we shall suppress mention of the free reduction map $\rho : (X \cup X^{-1})^* \ra F(X)$ in what follows.  Hence
if $p,q \in F(X)$ and $(l,r) \in \curlyR$, we shall write $prq$ for $p(r \rho)q$, and so on.

\begin{defn}
The  \textit{reduced Squier complex} $\Sq(\P)$ is the $2$--complex defined as follows:
\begin{itemize}
\item the vertex set of $\Sq(\curlyP)$ is the free group $F(X)$ on $X$,
\item the edge set of $\Sq(\curlyP)$ consists of all 3-tuples $(p, l,r, q)$ with $p, q \in F(X)$ and $(l,r) \in \curlyR$. Such an edge will start at $plq$ and end at $prq$, so each edge corresponds to the application of a relation in $F(X)$. 
\item the 2-cells correspond to applications of non-overlapping relations, and so a 2-cell is attached along every edge path of the form:
$$\xymatrix@=6em{
 \bullet \ar[d]_{(plqp',  l',r', q')} \ar[r]^{(p, l,r, qp'l'q')} & \bullet \ar[d]^{(prqp', l', r', q')} \\
 \bullet\ar[r]_{(p, l,r, qp'r'q')} & \bullet }$$
The edge paths 
\begin{align*}
& (p, l,r, qp'l'q')(prqp', l' , r', q') \\ \intertext{and} & (plqp', l' , r', q')(p, l,r, qp'r'q')
\end{align*}
 will therefore be homotopic in $\Sq(\P)$.
 \end{itemize}
\end{defn}

\begin{lemma}
The fundamental groupoid $\pi(\Sq(\P),F(X))$ of the Squier complex $\Sq(\P)$ of a group presentation $\P$ is a regular groupoid.
\end{lemma}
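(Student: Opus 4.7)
The plan is to exhibit explicit left and right actions of $F(X)$ on $\pi(\Sq(\P),F(X))$ and verify axioms (a)--(e) of Definition \ref{reg_cond}. On edges, the natural definition is
\[ u \rhd (p,l,r,q) = (up,l,r,q) \qquad \text{and} \qquad (p,l,r,q) \lhd v = (p,l,r,qv), \]
which are again edges of $\Sq(\P)$ because $F(X)$ is closed under multiplication. These extend componentwise to an action on edge-paths, sending a path from $w_1$ to $w_2$ to one from $uw_1$ to $uw_2$ (respectively $w_1 v$ to $w_2 v$), which handles the verification of (c) for edges and hence for paths.

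Next I would show the actions descend to homotopy classes, which is the only step requiring genuine checking. The defining 2-cells are attached along squares formed by the non-overlapping application of relations $(l,r)$ and $(l',r')$ at sites $p,q$ and $qp',q'$. Applying $u \rhd {-}$ replaces the prefix $p$ by $up$ throughout, producing precisely a square of the same form (with sites $up,q$ and $qp',q'$), and similarly $- \lhd v$ replaces the suffix. Hence each defining 2-cell is carried to a defining 2-cell, so the actions are well-defined on $\pi(\Sq(\P),F(X))$. Associativity of the actions, compatibility of the two actions with each other, and the identity laws (a)--(b) are then immediate from associativity in $F(X)$, and (d)--(e) hold because both actions operate by prepending/appending a fixed word in every component of a composable pair and send the empty path at $w$ to the empty path at $uw$ or $wv$.

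The one place to be slightly careful is axiom (c), the compatibility of the actions with $\D$ and $\R$: on an edge this is $(u\rhd(p,l,r,q))\D = uplq = u(plq) = u((p,l,r,q)\D)$, and likewise for $\R$ and for the right action; extension to paths is by induction on length. Since $V(\Sq(\P)) = F(X)$ is already a group, verifying these axioms establishes that $\pi(\Sq(\P),F(X))$ is regular in the sense of Definition \ref{reg_cond}. I anticipate no serious obstacle; the only mildly non-trivial point is the well-definedness check on 2-cells, and it reduces to observing that ``non-overlapping'' is preserved by uniform shifts of context on either side.
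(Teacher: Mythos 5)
Your proof is correct and takes essentially the same approach as the paper: the paper defines the identical edge-level actions $u \rhd (p,l,r,q) = (up,l,r,q)$ and $(p,l,r,q) \lhd v = (p,l,r,qv)$ and simply asserts that they extend to edge-paths and induce actions on homotopy classes. Your explicit check that each defining $2$--cell is carried to a defining $2$--cell (and the verification of axioms (a)--(e)) is precisely the detail the paper leaves to the reader.
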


\begin{proof}
The vertex set of $\pi = \pi(\Sq(\P),F(X))$ is the group $F(X)$.
We need to define left and right actions of $F(X)$ on homotopy classes of paths in $\Sq(\P)$. We first define such actions for single edges.
Let $u, v \in F(X)$ and suppose that $(p, l,r, q)$ is an edge in $\Sq(\P)$.  We define
\begin{align}
u \rhd (p, l,r, q) &= (up, l,r, q) \\
(p, l,r, q) \lhd v &= (p, l,r, qv) \,.
\end{align}
It is then clear that these actions can be extended to edge-paths in $\Sq(\P)$, and induce actions of $F(X)$ on
homotopy classes of paths.
\end{proof}

In what follows it will be convenient to work directly with edge paths in $\Sq(\P)$, even though these are to be interpreted as
representatives of homotopy classes in the fundamental groupoid $\pi(\Sq(\P),F(X))$.  In particular, we shall apply the operations
$\ast$ and $\circledast$ directly to edge paths.

\begin{thm} \label{monoidal}
The regular groupoid $\pi(Sq(\curlyP),F(X))$ is monoidal.
\end{thm}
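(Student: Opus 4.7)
The plan is to prove the fixed-endpoint homotopy $\alpha * \beta \simeq \alpha \circledast \beta$ in $\Sq(\P)$ for arbitrary edge paths $\alpha, \beta$, by induction on the combined edge-path length $|\alpha| + |\beta|$. Once this is established, the two operations $*$ and $\circledast$ coincide on homotopy classes, which is exactly the monoidality of the regular groupoid $\pi(\Sq(\P),F(X))$.

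For the base case, with $\alpha = (p, l, r, q)$ and $\beta = (p', l', r', q')$ both single edges, unwinding the definitions of the two operations yields
\begin{align*}
\alpha * \beta &= (p, l, r, qp'l'q') \circ (prqp', l', r', q'),\\
\alpha \circledast \beta &= (plqp', l', r', q') \circ (p, l, r, qp'r'q').
\end{align*}
These are precisely the two edge-paths bounding a $2$-cell of $\Sq(\P)$, by the definition of the $2$-cells for non-overlapping relations, so they are fixed-endpoint homotopic.

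For the inductive step, suppose $\beta = \beta_1 \circ \beta_2$ with $\beta_1 \R = \beta_2 \D = w$; the case where $\alpha$ splits is dual. Using the distributivity $\alpha\R \rhd (\beta_1 \circ \beta_2) = (\alpha\R \rhd \beta_1) \circ (\alpha\R \rhd \beta_2)$ from Definition \ref{reg_cond}(d), I expand
\[ \alpha * \beta = (\alpha \lhd \beta_1\D) \circ (\alpha\R \rhd \beta_1) \circ (\alpha\R \rhd \beta_2) = (\alpha * \beta_1) \circ (\alpha\R \rhd \beta_2). \]
The inductive hypothesis on the shorter pair $(\alpha, \beta_1)$ replaces $\alpha * \beta_1$ by $(\alpha\D \rhd \beta_1) \circ (\alpha \lhd w)$; the trailing two factors then recombine as $(\alpha \lhd \beta_2\D) \circ (\alpha\R \rhd \beta_2) = \alpha * \beta_2$, which is homotopic to $\alpha \circledast \beta_2 = (\alpha\D \rhd \beta_2) \circ (\alpha \lhd \beta\R)$ by the inductive hypothesis. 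Reassembling the three factors via the same distributivity yields $(\alpha\D \rhd \beta) \circ (\alpha \lhd \beta\R) = \alpha \circledast \beta$.

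I do not anticipate any serious obstacle. The monoidal identity at the level of single edges is built directly into the $2$-cells of $\Sq(\P)$, and the inductive step is routine bookkeeping relying only on the semiregular-groupoid distributivity axioms of Definition \ref{reg_cond}. The main care needed is in tracking the domain and range vertices so that the factors recombine into the correct $*$- and $\circledast$-expressions.
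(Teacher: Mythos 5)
Your proposal is correct and follows essentially the same route as the paper's proof: the single-edge case is exactly the $2$--cell relation, and the general case is reduced to it by peeling off one edge at a time using the distributivity axioms of Definition \ref{reg_cond}(d), with careful matching of domains and ranges. The only (cosmetic) difference is that you organise this as a single induction on $|\alpha|+|\beta|$ with a symmetric splitting of whichever factor is longer, whereas the paper runs a nested double induction (first on the length of $\alpha$ with $\beta$ a single edge, then on the length of $\beta$); the underlying computations are identical.
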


\begin{proof}
Recall that 
\begin{align*}
\alpha \ast \beta &= (\alpha \lhd \beta \D) \circ (\alpha \R \rhd \beta) \\
\alpha \circledast \beta &= (\alpha \D \rhd \beta) \circ (\alpha \lhd \beta \R) \,.
\end{align*}
First we consider single-edge paths $\alpha = (p,l,r,q)$ and $\beta=(p',l',r',q')$. Then
\begin{align*}
\alpha * \beta &= (p,l,r,qp'l'q')\circ(prqp',l',r',q')\\
\alpha \circledast \beta &= (plqp',l',r',q') \circ (p,l,r,qp'r'q') \,.
\end{align*}
These paths comprise the boundary of a  $2$--cell in $\Sq(\P)$ and are thus homotopic:  hence $\alpha * \beta = \alpha \circledast \beta$.

Now consider edge paths $\alpha = \alpha_1 \circ \alpha_2 \circ \dotsb \circ \alpha_k$ and $\beta = \beta_1 \circ \beta_2 \circ \dotsb \circ \beta_m$  and with each $\alpha_i$, $\beta_j$ single edges. We set $m=1$ and $k>1$: then we may assume that if $\beta$ is the single edge $\beta_1$ then
\[ (\alpha_1 \circ \dotsb \circ \alpha_{k-1}) \ast \beta_1 = (\alpha_1 \circ \dotsb \circ \alpha_{k-1}) \circledast \beta_1 \,. \]
We then have
\begin{align*}
\alpha * \beta_1 &= (\alpha \lhd \beta_1 \D) \circ (\alpha_k \R \rhd \beta_1)\\
&= (\alpha_1 \lhd \beta_1 \D) \circ (\alpha_2 \lhd \beta_1 \D) \circ \dotsb \circ (\alpha_k \lhd \beta_1 \D) \circ  (\alpha_k \R \rhd \beta_1)\\
&= (\alpha_1 \lhd \beta_1 \D) \circ \dotsb \circ (\alpha_{k-1} \lhd \beta_1 \D) \circ (\alpha_k * \beta_1)\\
&= (\alpha_1 \lhd \beta_1 \D) \circ \dotsb \circ (\alpha_{k-1} \lhd \beta_1 \D) \circ (\alpha_k \circledast \beta_1)\\
&= (\alpha_1 \lhd \beta_1 \D) \circ \dotsb \circ (\alpha_{k-1} \lhd \beta_1 \D) \circ (\alpha_{k}\D \rhd \beta_1) \circ (\alpha_k \lhd \beta_1 \R)\\
&= (\alpha_1 \lhd \beta_1 \D) \circ \dotsb \circ (\alpha_{k-1} \lhd \beta_1 \D) \circ (\alpha_{k-1}\R \rhd \beta_1) \circ (\alpha_k \lhd \beta_1 \R)\\
&=((\alpha_1 \circ \dotsb \circ \alpha_{k-1}) \ast \beta_1) \circ (\alpha_k \lhd \beta_1 \R)\\
&=((\alpha_1 \circ \dotsb \circ \alpha_{k-1}) \circledast \beta_1) \circ (\alpha_k \lhd \beta_1 \R)\\
&= (\alpha_1 \D \rhd \beta_1) \circ (\alpha_1 \lhd \beta_1 \R) \circ \dotsb \circ (\alpha_{k-1} \lhd \beta_1 \R) \circ (\alpha_k \lhd \beta_1 \R)\\
&= \alpha \circledast \beta_1
\end{align*}
So by induction on $k$, we have $\alpha \ast \beta = \alpha \circledast \beta$, whenever $m=1$. Now for $m>1$
we assume inductively that, for any edge path $\alpha$,
\[ \alpha \ast (\beta_1 \circ \dotsb \circ \beta_{m-1}) = \alpha \circledast (\beta_1 \circ \dotsb \circ \beta_{m-1}) \,.\]
Then
\begin{align*}
\alpha * \beta &= (\alpha \lhd \beta_1 \D) \circ (\alpha_n \R \rhd \beta)\\
&=\big((\alpha_1 \circ \dotsb \circ \alpha_n) \lhd \beta_1 \D\big) \circ \big(\alpha_n \R \rhd (\beta_1 \circ \dotsb \circ \beta_{m-1})\big) \circ \big(\alpha_n \R \rhd \beta_j\big)\\
&= \big( \alpha * (\beta_1 \circ \dotsb \circ \beta_{m-1}) \big) \circ \big(\alpha_n \R \rhd \beta_m\big)\\
&= \big( \alpha \circledast (\beta_1 \circ \dotsb \circ \beta_{m-1}) \big) \circ \big(\alpha_n \R \rhd \beta_m\big)\\
&= \big( \alpha_1 \D \rhd (\beta_1 \circ \dotsb \circ \beta_{m-1}) \big) \circ \big( \alpha \lhd \beta_{m-1}\R \big) \circ \big(\alpha_n \R \rhd \beta_j\big)\\
& = \big( \alpha_1 \D \rhd (\beta_1 \circ \dotsb \circ \beta_{m-1}) \big) \circ \big( \alpha \lhd \beta_m\D \big) \circ \big(\alpha_n \R \rhd \beta_j\big)\\
& = \big( \alpha_1 \D \rhd (\beta_1 \circ \dotsb \circ \beta_{m-1}) \big) \circ \big( \alpha * \beta_m \big)\\
& = \big( \alpha_1 \D \rhd (\beta_1 \circ \dotsb \circ \beta_{m-1}) \big) \circ \big( \alpha \circledast \beta_m \big)\\
& = \big( \alpha_1 \D \rhd (\beta_1 \circ \dotsb \circ \beta_{m-1}) \big) \circ \big( \alpha_1 \D \rhd \beta_m \big) \circ \big(\alpha \lhd \beta_m \R \big)\\
&= \big( \alpha \D \rhd \beta \big) \circ \big( \alpha \rhd \beta \R \big)\\
&= \alpha \circledast \beta
\end{align*}
Thus by induction  we have that $\alpha * \beta = \alpha \circledast \beta$, for all edge paths $\alpha, \beta$ in $\Sq(\P)$.
\end{proof}

From Proposition \ref{reg_gpd_is_cm} we have:

\begin{cor}
\label{squier_cm}
The subset $\st_1(\pi(\Sq(\P),F(X))$ of the fundamental groupoid of the Squier complex $\Sq(\P)$ is a group
under the binary operation $\ast$,
and the restriction of the range map is a crossed module 
\[ \R: \st_1(\pi(\Sq(\P),F(X))) \ra F(X) \,.\]
\end{cor}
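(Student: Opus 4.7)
The plan is to combine the preceding results; the real work has already been done in Theorem \ref{monoidal}, and the corollary is essentially a packaging step.

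First I would establish that $\st_1(\pi(\Sq(\P),F(X)))$ is a group under $\ast$. The lemma preceding Theorem \ref{monoidal} already shows that $\pi(\Sq(\P),F(X))$ is a regular groupoid, so Proposition \ref{star op}(c) makes the whole of $(\pi(\Sq(\P),F(X)),\ast)$ into a group with identity $1_1$. To descend to $\st_1$ I would invoke Proposition \ref{star op}(d): from the domain/range axioms of Definition \ref{reg_cond}(c), if $\alpha,\beta \in \st_1$ then $(\alpha\ast\beta)\D = (\alpha\D)(\beta\D) = 1$, and the $\ast$-inverse formula from the proof of Proposition \ref{star op} gives $(\alpha^{\ast})\D = \alpha\R^{-1}(\alpha^{\circ}\D)(\alpha\D)^{-1} = \alpha\R^{-1}\alpha\R = 1$. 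So $\st_1$ is closed under both $\ast$ and $\ast$-inverses, hence is a subgroup.

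For the crossed module assertion I would simply apply Proposition \ref{reg_gpd_is_cm}, which states that in a regular groupoid $G$ the restriction $\R : \st_e(G) \to V(G)$ is a crossed $V(G)$-module precisely when $G$ is monoidal. Theorem \ref{monoidal} supplies exactly this hypothesis for $\pi(\Sq(\P),F(X))$, and so the corollary follows.

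There is no substantive obstacle; the corollary is a routine repackaging of what has already been proved. The only minor point worth naming is that the conjugation action $\alpha^q = q^{-1}\rhd\alpha\lhd q$ appearing in Proposition \ref{reg_gpd_is_cm} preserves $\st_1$, which is immediate from $(q^{-1}\rhd\alpha\lhd q)\D = q^{-1}(\alpha\D)q$.
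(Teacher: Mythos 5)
Your proposal is correct and follows exactly the route the paper takes: the paper derives the corollary directly from Proposition \ref{reg_gpd_is_cm} together with Theorem \ref{monoidal}, with the subgroup property of $\st_1$ supplied by Proposition \ref{star op}(d). The closure computations you spell out (for $\ast$, $\ast$-inverses, and the conjugation action) are left implicit in the paper but are the right details to check.
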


\subsection{The crossed module of a Squier complex}
\label{cm_Sq_gp}
Our aim is now to show that the crossed module in Corollary \ref{squier_cm} is isomorphic to the free crossed module 
$C \xrightarrow{\partial} F(X)$ derived from the presentation $\P$, as in Section \ref{cm_from_pres}.  Furthering our
blurring of the distinction between an edge path and its homotopy class in the fundamental groupoid, we shall abbreviate
the group $\st_1(\pi(\Sq(\P),F(X)))$ as $\st_1(\Sq(\P))$.  We denote by $S_1$ the set of all edges $e \in \Sq(\P)$ with
$e \D = 1$, that is
\begin{align*} S_1 &= \{ (p,l,r,q) : p,q \in F(X), (l,r) \in \curlyR, plq=1 \} \\ &= \{ (q^{-1}l^{-1},l,r,q) : q \in F(X), (l,r) \in \curlyR \} \,.\end{align*}
We shall denote the edge $(q^{-1}l^{-1},l,r,q)$ by $\lambda_{l,r,q}$.

Let $e = (p,l,r,q)$ be an edge of $\Sq(\P)$ in the connected component of $1 \in F(X)$, and define
\[ e\lambda = (e\D)^{-1} \rhd e = \lambda_{l,r,q} \in S_1 \,.\]

\begin{prop} \label{lambda}
Let $\alpha$ be an edge path in $\st_1(\Sq(\curlyP))$.  Then $\alpha$  is equal to a $\ast$-product of single edges in $S_1$. 
Thus the group $(\st_1(\Sq(\curlyP)),\ast)$ is generated by $S_1$
\end{prop}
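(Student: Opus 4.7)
The plan is to decompose $\alpha \in \st_1(\Sq(\P))$ as a concatenation $\alpha = \alpha_1 \circ \alpha_2 \circ \dotsb \circ \alpha_n$ of single edges and then prove by induction on $n$ that this concatenation coincides with the $\ast$-product $\lambda_1 \ast \lambda_2 \ast \dotsb \ast \lambda_n$, where $\lambda_i = \alpha_i \lambda \in S_1$ is the translate of $\alpha_i$ back to the identity vertex. Fix notation by setting $v_1 = 1$ and $v_{i+1} = \alpha_i \R$, so that $\alpha_i$ starts at $v_i$; by the definition preceding the proposition, $\lambda_i = v_i^{-1} \rhd \alpha_i$, and conversely $\alpha_i = v_i \rhd \lambda_i$ using axiom (a) of Definition \ref{reg_cond}. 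Each $\lambda_i$ lies in $S_1$ by construction, and every $\lambda_i$ has $\lambda_i \D = 1$.

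The base case $n=1$ is immediate: since $\alpha_1 \D = 1$, we have $\lambda_1 = \alpha_1$. For the inductive step, write $\alpha' = \alpha_1 \circ \dotsb \circ \alpha_{n-1}$, and assume $\alpha' = \lambda_1 \ast \dotsb \ast \lambda_{n-1}$. Note that $\alpha' \D = 1$ and $\alpha' \R = v_n$. Then, unwinding the definition of $\ast$,
\[
(\lambda_1 \ast \dotsb \ast \lambda_{n-1}) \ast \lambda_n \;=\; \alpha' \ast \lambda_n \;=\; (\alpha' \lhd \lambda_n \D) \circ (\alpha' \R \rhd \lambda_n) \;=\; \alpha' \circ (v_n \rhd \lambda_n) \;=\; \alpha' \circ \alpha_n \;=\; \alpha,
\]
where the third equality uses $\lambda_n \D = 1$ together with axiom (b) (so $\alpha' \lhd 1 = \alpha'$), and the fourth uses $v_n \rhd \lambda_n = v_n \rhd (v_n^{-1} \rhd \alpha_n) = \alpha_n$ by axiom (a).

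The final claim that $(\st_1(\Sq(\P)), \ast)$ is generated as a group by $S_1$ is then immediate: every element of $\st_1(\Sq(\P))$ is represented by some edge path $\alpha$ starting at $1$, hence equals a $\ast$-product of edges in $S_1$, and inverses are supplied automatically by the group structure of Proposition \ref{star op}(c).

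I do not expect any serious obstacle. The calculation is routine once one observes that the condition $\alpha_i \D = 1$ for the translated edges makes the $\lhd$-twist in the definition of $\ast$ trivial, reducing $\ast$ on these elements to plain concatenation of paths (after the $\rhd$-twist by the running endpoint). The only care required is bookkeeping with the semiregular axioms, in particular the associativity $(v_n v_n^{-1}) \rhd \alpha_n = \alpha_n$ of the left action needed to recover $\alpha_n$ from $v_n \rhd \lambda_n$.
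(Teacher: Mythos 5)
Your proof is correct and follows essentially the same route as the paper: induction on the length of the edge path, translating each edge $\alpha_i$ to $\lambda_i = (\alpha_i\D)^{-1} \rhd \alpha_i \in S_1$ and observing that, because $\lambda_n\D = 1$ and $\alpha'\R = \alpha_n\D$, the product $\alpha' \ast \lambda_n$ collapses to the concatenation $\alpha' \circ \alpha_n$. The only difference is cosmetic --- you expand the $\ast$-product to recover the concatenation, whereas the paper rewrites the concatenation as a $\ast$-product --- so no further comment is needed.
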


\begin{proof}
The claim is trivial for edge paths $\alpha$ of length $1$, so now suppose that
$$\alpha = \alpha_1 \circ \alpha_2 \circ \dotsb \circ \alpha_n$$
for some $n>1$, with each $\alpha_i $ a single edge.
Set $\lambda_i = \alpha_i \lambda=(\alpha_i\D)^{-1} \rhd \alpha_i$. Then  $\lambda_i \in S_1$, and $\alpha_1 = \lambda_1$.
We now assume inductively that
$$\alpha_1 \circ \alpha_2 \circ \dotsb \circ \alpha_{n-1} = \lambda_1 \ast \lambda_2 * \dotsb \ast \lambda_{n-1} \,.$$
Then
\begin{align*}
\alpha &= (\alpha_1 \circ \dotsb \circ \alpha_{n-1}) \circ \alpha_n\\
&= (\alpha_1 \circ \dotsb \circ \alpha_{n-1}) \circ (\alpha_n\D \rhd \lambda_n)\\
&=(\alpha_1 \circ \dotsb \circ \alpha_{n-1}) \circ (\alpha_{n-1}\R \rhd \lambda_n)\\
&=(\alpha_1 \circ \dotsb \circ \alpha_{n-1}) \ast \lambda_n\\
&=\lambda_1 \ast \lambda_2 \ast \dotsb * \lambda_{n-1} \ast \lambda_n \,.
\end{align*}
Therefore $\alpha = \lambda_1 \ast \dotsb \ast \lambda_n$ . 
\end{proof}

\begin{defn}
\label{lamdba_rewriting}
We denote the product $\lambda_1 * \dotsb \ast \lambda_n$ used to represent $\alpha \in \st_1(\Sq(\curlyP))$ in Proposition
\ref{lambda} by $\alpha \lambda$.
\end{defn}

\begin{lemma}
\label{lambda_product}
Suppose that $\alpha \circ \beta \in \st_1(\Sq(\P))$.  Then $(\alpha \circ \beta) \lambda = \alpha \lambda \ast \beta \lambda$.
\end{lemma}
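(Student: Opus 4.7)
The plan is to apply Proposition \ref{lambda} three times — to $\alpha$, to $\beta$, and to $\alpha \circ \beta$ — and then invoke associativity of $\ast$. The hypothesis that $\alpha \circ \beta \in \st_1(\Sq(\P))$ together with composability of $\alpha$ and $\beta$ forces each of $\alpha$ and $\beta$ to lie in $\st_1(\Sq(\P))$, so each admits its $\lambda$-representation in the sense of Definition \ref{lamdba_rewriting}.

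Write $\alpha = \alpha_1 \circ \dotsb \circ \alpha_n$ and $\beta = \beta_1 \circ \dotsb \circ \beta_m$ as compositions of single edges, and set $\lambda_i = \alpha_i \lambda$ and $\mu_j = \beta_j \lambda$. The crucial observation is that the single-edge operation $e \mapsto e \lambda = (e\D)^{-1} \rhd e$ is intrinsic to $e$ and does not remember which path contains $e$. Regarding $\alpha \circ \beta$ as the composition of the $n+m$ single edges $\alpha_1, \dotsc, \alpha_n, \beta_1, \dotsc, \beta_m$, Proposition \ref{lambda} then yields
\[ (\alpha \circ \beta) \lambda = \lambda_1 \ast \dotsb \ast \lambda_n \ast \mu_1 \ast \dotsb \ast \mu_m, \]
while applying the same proposition to $\alpha$ and $\beta$ separately gives $\alpha \lambda = \lambda_1 \ast \dotsb \ast \lambda_n$ and $\beta \lambda = \mu_1 \ast \dotsb \ast \mu_m$. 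Since $(\st_1(\Sq(\P)), \ast)$ is a group by Corollary \ref{squier_cm}, the operation $\ast$ is associative, and combining the three displays produces the stated identity.

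No step presents a genuine obstacle, as the argument simply unwinds definitions and applies associativity. The only point worth emphasising is the insensitivity of $e \lambda$ to the surrounding path, which is built into the definition and is what allows us to identify the single-edge $\lambda$-factors of $\alpha \circ \beta$ with those contributed by $\alpha$ followed by those contributed by $\beta$; without this observation one would have to redo the inductive construction in the proof of Proposition \ref{lambda} at the "join" between $\alpha$ and $\beta$, which would be a purely notational nuisance rather than a real difficulty.
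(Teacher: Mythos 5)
Your argument is essentially the intended one (the paper states this lemma without proof): decompose $\alpha \circ \beta$ into its constituent edges, observe that $e\lambda = (e\D)^{-1} \rhd e$ depends only on the edge $e$ and not on the path containing it, and regroup using associativity of $\ast$ in $(\st_1(\Sq(\P)),\ast)$. One assertion in your opening paragraph is false, though: the hypothesis does \emph{not} force $\beta$ to lie in $\st_1(\Sq(\P))$. Composability gives $\beta\D = \alpha\R$, which in general is not $1$ (indeed, if it were, $\alpha$ would be a loop at $1$ and the lemma would lose most of its point). Consequently $\beta\lambda$ is not literally covered by Definition \ref{lamdba_rewriting} and must be read as the edge-by-edge extension $\beta\lambda = \beta_1\lambda \ast \dotsb \ast \beta_m\lambda$ — which is exactly what your computation uses, so nothing downstream is affected; you should just delete the claim that $\beta \in \st_1(\Sq(\P))$ and say instead that $\lambda$ extends to any edge path in the component of $1$. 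As a cross-check, this extension satisfies $\gamma\lambda = (\gamma\D)^{-1} \rhd \gamma$ for every such path $\gamma$, and then
\[ \alpha\lambda \ast \beta\lambda = \alpha \ast \bigl((\beta\D)^{-1} \rhd \beta\bigr) = (\alpha \lhd 1) \circ \bigl(\alpha\R (\beta\D)^{-1} \rhd \beta\bigr) = \alpha \circ \beta = (\alpha \circ \beta)\lambda \]
gives the lemma in one line, using $\alpha\R = \beta\D$.
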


We now want to understand the effect of homotopy of edge paths in $\Sq(\P)$ on the $\ast$--products defined in
Proposition \ref{lambda}.  We first consider a $1$--homotopy, that is, the insertion of deletion of a pair of inverse
edges.  Let $\xi = \rho \circ \sigma$  in $Sq(\curlyP)$, with $\rho \in \st_1(\Sq(\curlyP))$. Then consider the homotopic 
path $\xi' = \rho \circ \alpha \circ \alpha^{\circ} \circ \sigma$, with $\alpha$ a single edge. Then
\begin{align*}
\xi' \lambda &= \rho \lambda \ast \alpha \lambda \ast(\alpha^{\circ}) \lambda\ast \sigma \lambda\\
&= \rho \lambda \ast [( \alpha \D)^{-1} \rhd \alpha  \ast (\alpha^{\circ} \D)^{-1} \rhd \alpha^{\circ} ] \ast \sigma \lambda\\
&= \rho \lambda \ast [( \alpha \D)^{-1} \rhd \alpha  \ast (\alpha \R)^{-1} \rhd \alpha^{\circ} ] \ast \sigma \lambda\\
&= \rho \lambda \ast [ ( \alpha \D)^{-1} \rhd \alpha \lhd 1) \circ  (\alpha \D)^{-1} \alpha \R \rhd  ((\alpha \R)^{-1} \rhd \alpha^{\circ} )] \ast \sigma \lambda\\
&= \rho \lambda \ast [ ( \alpha \D)^{-1} \rhd \alpha) \circ  (\alpha \D)^{-1}  \rhd \alpha^{\circ} )] \ast \sigma \lambda\\
&= \rho \lambda \ast \sigma \lambda = \xi \lambda \,.
\end{align*}
Therefore a $1$--homotopy applied to an edge path $\xi$ does not change the $\ast$--product $\xi \lambda$.

Suppose that we have a $2$--cell
\begin{equation}
\label{general_2-cell}
\xymatrix@=6em{
\bullet  \ar[d]_{(plqt,  s,d, u)} \ar[r]^{(p, l,r, qtsu)} & \bullet \ar[d]^{(prqt, s,d, u)} \\
\bullet \ar[r]_{(p, l,r, qtdu)} & \bullet }
\end{equation}
in the connected component of $1 \in F(X)$ in $\Sq(\P)$, with
\begin{equation}
\label{naming_edges}  \alpha =(p, l,r, qtsu), \beta=(prqt,s,d,u), \gamma=(plqt,s,d,u), \delta=(p, l,r, qtdu) \,. \end{equation}
This $2$--cell gives a homotopy between $\alpha \circ \beta$ and $ \gamma \circ \delta$, or equivalently tells us that in
$\pi(\Sq(\P))$ we have
\[ (p,l,r,q) \ast (t,s,d,u) = (p,l,r,q) \circledast (t,s,d,u) \,.\]

If this $2$--cell is involved in a $2$--homotopy between edge paths $\xi$ and $\xi'$, we may assume using $1$--homotopies where necessary, that we have $\xi = \rho \circ \alpha \circ \beta \circ \sigma$ and 
$\xi' = \rho \circ \gamma \circ \delta \circ \sigma$, that is a configuration
$$\xymatrix { 
& & & & \bullet \ar[dr]^{\beta}& & & \\
1 \ar@{-}[rrr]_{\rho}& & & \bullet \ar[ur]^{\alpha} \ar@{-->}[dr]_{\gamma} & & \bullet  \ar@{<--}[dl]^{\delta} \ar@{-}[rr]_{\sigma} & & \bullet\\
& & & & \bullet & & & }$$
Then, using $\simeq$ to denote homotopy of edge paths in $\Sq(\P)$, we have
\begin{equation}\label{2-cell_lambda} \begin{aligned}
\xi \lambda &= \rho \lambda \ast \alpha \lambda \ast \beta \lambda \ast \sigma \lambda\\
&= \rho \lambda \ast (\alpha \circ \beta) \ast \sigma \lambda\\
& \simeq  \rho \lambda \ast (\gamma \circ \delta) \ast \sigma \lambda\\
&=  \rho \lambda \ast \gamma \lambda \ast \delta \lambda \ast \sigma \lambda\\
&= \xi' \lambda \,.
\end{aligned} \end{equation}

The above considerations show that, for a given homotopy class in $\st_1(\Sq(\P))$, we may select a representative edge
path $\xi$ in the form of its $\ast$--product $\xi \lambda$ and that this product will be unique up to changes induced by the
$2$--cells in $\Sq(\P)$, which may modify the product as in equations \eqref{2-cell_lambda} above.  We can be more precise.

\begin{prop}
\label{def_rels_lam_prop}
Given $q \in F(X)$ and $(l,r) \in \curlyR$, we set
\[ \lambda_{l,r,q} = (q^{-1}l^{-1},l,r,q) \in S_1 \,.\]
Then the following are a set of defining relations for the group $(\st_1(\Sq(\P)),\ast)$ on the generating set $S_1$:
\begin{equation}
\label{def_rels_lam} \lambda_{l,r,vsu} \ast \lambda_{s,d,u} = \lambda_{s,d,u} \ast \lambda_{l,r,vdu} \,,\end{equation}
where $(l,r), (s,d) \in \curlyR$ and $u,v \in F(X)$.
\end{prop}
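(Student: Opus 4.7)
The strategy is to combine Proposition~\ref{lambda}, which already identifies $S_1$ as a generating set for $(\st_1(\Sq(\P)),\ast)$, with the analysis of how the $\lambda$-rewriting behaves under elementary homotopies that is carried out in the paragraphs immediately preceding the proposition. There it is established that a $1$-homotopy preserves the $\ast$-product $\xi\lambda$, and that a $2$-homotopy alters $\xi\lambda$ exactly by substituting the two sides of a 2-cell boundary. The proof therefore divides into checking that the relations \eqref{def_rels_lam} hold in $(\st_1(\Sq(\P)),\ast)$, and then arguing that they are sufficient.

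\textbf{Step 1: the relations hold.} Given $(l,r),(s,d)\in\curlyR$ and $u,v\in F(X)$, specialise the generic 2-cell \eqref{general_2-cell} by choosing $p=u^{-1}s^{-1}v^{-1}l^{-1}$, $q=v$, $t=1$, which places its top-left vertex at $1\in F(X)$. Direct substitution into $e\lambda=(e\D)^{-1}\rhd e$ then gives
\[
\alpha\lambda=\lambda_{l,r,vsu},\quad \beta\lambda=\lambda_{s,d,u},\quad \gamma\lambda=\lambda_{s,d,u},\quad \delta\lambda=\lambda_{l,r,vdu}.
\]
Because $\alpha\circ\beta\simeq\gamma\circ\delta$ across this 2-cell, Lemma~\ref{lambda_product} converts this homotopy into the identity $\lambda_{l,r,vsu}\ast\lambda_{s,d,u}=\lambda_{s,d,u}\ast\lambda_{l,r,vdu}$, which is exactly \eqref{def_rels_lam}. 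Conversely, every 2-cell of $\Sq(\P)$ lying in the connected component of $1$ arises in this way for a suitable choice of data, so \eqref{def_rels_lam} exhausts the contribution of 2-cells.

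\textbf{Step 2: the relations are defining.} Let $\widetilde G$ denote the group abstractly presented by generators $S_1$ subject to the relations \eqref{def_rels_lam}. Step~1 produces a homomorphism $\phi:\widetilde G\to(\st_1(\Sq(\P)),\ast)$ that is the identity on $S_1$, and surjectivity of $\phi$ is Proposition~\ref{lambda}. For injectivity, take a word $w$ over $S_1\cup S_1^{-1}$ with $\phi(w)=1_1$. Using the formula $\alpha\ast\beta=(\alpha\lhd\beta\D)\circ(\alpha\R\rhd\beta)$ repeatedly realises $\phi(w)$ as a concrete edge path $\xi_w$ in $\Sq(\P)$ that is nullhomotopic. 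Decomposing a chosen nullhomotopy as a finite sequence of elementary $1$-homotopies and $2$-homotopies, the excerpt's calculations show that each $1$-homotopy is a formal cancellation in $\widetilde G$ while each $2$-homotopy (as in equation~\eqref{2-cell_lambda}) substitutes one side of a defining relation for the other inside a longer $\ast$-product. Induction on the length of the homotopy sequence then collapses $w$ to the empty word in $\widetilde G$, completing injectivity.

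\textbf{Anticipated obstacle.} The principal technical delicacy is the handling of inverse edges in the $1$-homotopy step: the $\ast$-inverse of a generator $\lambda_{l,r,q}\in S_1$ is not itself an element of $S_1$, so one must verify that the cancellation $\alpha\lambda\ast(\alpha^{\circ})\lambda=1$ used in the excerpt is a formal consequence of the group-theoretic axioms and not merely an identity borrowed from the ambient group $\st_1(\Sq(\P))$. Fortunately, the derivation shown in the excerpt uses only the axioms of a regular groupoid (the action laws for $\rhd,\lhd$, associativity of $\circ$, and the identity $\alpha\circ\alpha^{\circ}=1_{\alpha\D}$) and does not invoke any $2$-cell relation, so it transfers verbatim from $\st_1(\Sq(\P))$ to $\widetilde G$.
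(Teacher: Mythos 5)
Your proposal is correct and follows essentially the same route as the paper: one first checks that each instance of \eqref{def_rels_lam} is the boundary relation of a $2$--cell based at $1$, and then observes that the general $2$--cell rewriting of \eqref{2-cell_lambda} reduces, after $\lambda$--translation of its boundary edges (the paper's substitution $v=qt$, your translation-to-basepoint argument), to an instance of \eqref{def_rels_lam}, so that $1$--homotopies are free cancellations and $2$--homotopies are applications of the stated relations. Your Step 2 simply makes explicit the surjectivity/injectivity argument that the paper leaves in the discussion preceding the proposition, and your closing remark correctly identifies $(\alpha^{\circ})\lambda$ as the formal $\ast$--inverse of $\alpha\lambda$, which is the point needed for the $1$--homotopy step.
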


\begin{proof}
Since  
\[ \lambda_{l,r,vsu} \ast \lambda_{s,d,u} = (u^{-1}s^{-1}v^{-1}l^{-1},l,r,vsu) \circ (u^{-1}s^{-1}v^{-1}l^{-1}rv,s,d,u) \]
and
\[  \lambda_{s,d,u} \ast \lambda_{l,r,vdu} = (u^{-1},s,d,u) \circ (u^{-1}v^{-1}l^{-1},l,r,vdu) \,,\]
we see that the stated relations are true in $(\st_1(\Sq(\P)),\ast)$  since they record the equality of the two paths around the
sides of the $2$--cell
$$\xymatrix@=10em{
\bullet \ar[d]_{(u^{-1},s,d,u)} \ar[r]^{(u^{-1}s^{-1}v^{-1}l^{-1},l,r,vsu)} & \bullet \ar[d]^{(u^{-1}s^{-1}v^{-1}l^{-1}rv,s,d,u)} \\
\bullet \ar[r]_{(u^{-1}v^{-1}l^{-1},l,r,vdu)} & \bullet}$$
On the other hand, to accomplish the rewriting in \eqref{2-cell_lambda}, we need to identify the paths around the boundary of a
general $2$--cell as in \eqref{general_2-cell} and, in the notation of \eqref{naming_edges}, use the relation
\[ \alpha \lambda \ast \beta \lambda = \gamma \lambda \ast \delta \lambda \,. \]
Now
\begin{align*}
\alpha \lambda &= (u^{-1}s^{-1}t^{-1}q^{-1}l^{-1},l,r,qtsu) =(l^{-1},l,r,1)^{qtsu} \,, \\
\beta \lambda &=  (u^{-1}s^{-1},s,d,u) = (s^{-1},s,d,1)^u = \gamma \lambda\\
\intertext{and}
\delta \lambda &= (u^{-1}d^{-1}t^{-1}q^{-1}l^{-1},l,r,qtdu) = (l^{-1},l,r,1)^{qtdu} \,.
\end{align*}
If we set $v=qt$ then
\begin{align*}
\alpha \lambda &= (u^{-1}s^{-1}v^{-1}l^{-1},l,r,vsu) = \lambda_{l,r,vsu} \,, \\
\beta \lambda &=  \lambda_{s,d,u} = \gamma \lambda\\
\intertext{and}
\delta \lambda &= (u^{-1}d^{-1}v^{-1}l^{-1},l,r,vdu) = \lambda_{l,r,vdu} \,.
\end{align*}
and the required relation is
\[ \lambda_{l,r,vsu} \ast \lambda_{s,d,u} = \lambda_{s,d,u} \ast  \lambda_{l,r,vdu} \,. \]
\end{proof}

\begin{thm}
The crossed $F(X)$-module $\st_1(\Sq(\curlyP)) \xrightarrow{\R} F(X)$ derived from the Squier complex $\Sq(\curlyP)$ of a group
presentation $\P = \< X: \curlyR \>$, is isomorphic to the free crossed $F(X)$-module $C \xrightarrow{\partial} F$ derived from $ \curlyP$.
\end{thm}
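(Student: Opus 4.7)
The plan is to use the universal property of the free crossed module $C$ to construct a canonical morphism $\phi : C \to \st_1(\Sq(\curlyP))$ of crossed $F(X)$--modules, and then to exhibit an explicit two-sided inverse using the presentation of $\st_1(\Sq(\curlyP))$ established in Proposition \ref{def_rels_lam_prop}. Define $\sigma : \curlyR \to \st_1(\Sq(\curlyP))$ by $\sigma(l,r) = \lambda_{l,r,1} = (l^{-1}, l, r, 1)$. Since $\lambda_{l,r,1} \R = l^{-1} r = (l,r) \hat\rho$, the defining square for a free crossed module commutes, and Proposition \ref{fre_cm_exist} delivers a unique morphism $\phi : C \to \st_1(\Sq(\curlyP))$ of crossed $F(X)$--modules extending $\sigma$. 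Since $((l,r),q) = ((l,r),1)^q$ in $C$ and $\phi$ is $F(X)$--equivariant, this forces
\[ \phi\bigl(((l,r),q)\bigr) = \lambda_{l,r,1}^q = q^{-1} \rhd \lambda_{l,r,1} \lhd q = \lambda_{l,r,q}. \]

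Surjectivity of $\phi$ is immediate from Proposition \ref{lambda}: every element of $\st_1(\Sq(\curlyP))$ is a $\ast$--product of elements of $S_1$, and every $\lambda_{l,r,q} \in S_1$ is $\phi(((l,r),q))$.

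For injectivity, I would construct an inverse $\psi : \st_1(\Sq(\curlyP)) \to C$ using the presentation on the generating set $S_1$ given in Proposition \ref{def_rels_lam_prop}: set $\psi(\lambda_{l,r,q}) = ((l,r), q)$ and verify that the defining relations \eqref{def_rels_lam} are sent to identities in $C$. With $V = vsu$ and $U = u$, the Peiffer identity in $C$ reads
\[ ((l,r), V) \cdot ((s,d), U) = ((s,d), U) \cdot ((l,r), V U^{-1} s^{-1} d U), \]
and since $V U^{-1} s^{-1} d U = vdu$, this is precisely the image under $\psi$ of the Squier relation $\lambda_{l,r,vsu} \ast \lambda_{s,d,u} = \lambda_{s,d,u} \ast \lambda_{l,r,vdu}$. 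One then checks the routine facts that $\psi$ is $F(X)$--equivariant and intertwines $\R$ with $\partial$, so $\psi$ is a morphism of crossed modules; because $\phi$ and $\psi$ agree with the identity on the generating sets of both sides, they are mutually inverse. The main obstacle is the notational bookkeeping in aligning the defining relations of Proposition \ref{def_rels_lam_prop} with the Peiffer generators of the kernel $P$ in the Whitehead construction of $C$: the substitution $V = vsu$ is the key observation, and one has to be careful with the clash between the letter $r$ as one side of a relation $(l,r) \in \curlyR$ and its use as a generic generator in the free crossed module construction.
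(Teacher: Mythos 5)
Your proposal is correct and follows essentially the same route as the paper: the morphism $\phi$ is obtained from the universal property via $(l,r) \mapsto \lambda_{l,r,1}$ (forcing $(l,r,u) \mapsto \lambda_{l,r,u}$), and the inverse is defined on the generators $S_1$ and checked against the defining relations of Proposition \ref{def_rels_lam_prop}, which become exactly the Peiffer identities in $C$ via the computation $vsu(u^{-1}s^{-1}du) = vdu$. The paper's proof is the same argument, stated slightly more tersely.
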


\begin{proof}
Recall from section \ref{cm_from_pres} that the free crossed module $C \xrightarrow{\partial} F$
has basis function $v: \curlyR \rightarrow C$, $v: (l,r) \mapsto (l,r, 1)$.  We define
$\overline{v}: \curlyR \rightarrow \st_1(\Sq(\curlyP))$ by $\overline{v}: (l,r) \mapsto (l^{-1}, l,r, 1)$.  Then $v\partial = \overline{v}\R
$, and thus by freeness of $(C, \partial)$, we have a crossed module morphism $\phi: C \rightarrow \st_1(\Sq(\curlyP))$, defined on generators by $(l,r, u) \mapsto (u^{-1}l^{-1}, l,r, u) = \lambda_{l,r,u}$.  We note that this is a bijection from the group generating set of $C$ to
$S_1$.

To obtain an inverse to $\phi$, we therefore wish to map $\lambda_{l,r,u} \mapsto (l,r,u)$.  This will be well-defined and a 
homomorphism if and only if the defining relations given in \eqref{def_rels_lam}  in Proposition \ref{def_rels_lam_prop} are mapped to an equation that holds in the group $C$.  Now the left-hand side of \eqref{def_rels_lam} maps to
\[  (l,r,vsu)(s,d,u) \]
and the right-hand side to 
\[ (s,d,u)(l,r,vdu) \,. \]
and in the crossed $F(X)$--module $C$ we do indeed have
\[ (s,d,u)^{-1}(l,r,vsu)(s,d,u) = (l,r,vsu(u^{-1}s^{-1}du)) = (l,r,vdu) \,. \]
\end{proof}

The kernel of the map $\R: \st_1(\Sq(\curlyP)) \ra F(X)$ is the local group at $1 \in F(X)$ of the groupoid
$\pi(\Sq(\P), F(X))$, that is the fundamental group $\pi_1(\Sq(\P),1)$.  Then from Proposition \ref{pres_module} we
obtain:

\begin{prop} \label{Sq_rel_mod} 
Let $\P = \< X: \curlyR \>$ be a presentation of a group $G$ with presentation map $\theta : F(X) \ra G$ and let
$N = \ker \theta$, so that $N^{ab}$ is the relation module of $\P$.
Then we have a short exact sequence of $G$--modules:
\begin{equation} \label{Sq_ses}
0 \rightarrow \pi_1(\Sq(\P),1) \rightarrow \bigoplus_{r \in \curlyR} \Z G \rightarrow N^{ab} \rightarrow 0 \,.
\end{equation}
\end{prop}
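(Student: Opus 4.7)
The plan is to derive the short exact sequence directly by combining the isomorphism of crossed $F(X)$-modules established in the preceding theorem with the sequence \eqref{pres_module_ses} of Proposition \ref{pres_module}.

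First I would identify $\pi_1(\Sq(\P),1)$ with the kernel of $\R : \st_1(\Sq(\P)) \to F(X)$. An element of $\st_1(\Sq(\P))$ represents a homotopy class of paths starting at $1$, and such a class lies in $\ker \R$ precisely when it also ends at $1$; these are the loops at $1$, and by definition they form the local group of $\pi(\Sq(\P),F(X))$ at $1$, namely $\pi_1(\Sq(\P),1)$. The crossed-module isomorphism $\phi : C(\P) \to \st_1(\Sq(\P))$ of the preceding theorem therefore restricts to a group isomorphism $\ker \partial \to \pi_1(\Sq(\P),1)$, and by definition $\ker \partial = \pi(\P)$, the module of identities.

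Next I would invoke the structural results collected for the free crossed module $C(\P)$. Proposition \ref{free_cm_free_mod} gives that $C(\P)^{ab}$ is the free $G$-module on the image of $\hat{\rho} : \curlyR \to C(\P)$, hence
\[ C(\P)^{ab} \cong \bigoplus_{r \in \curlyR} \Z G \]
as $G$-modules. Substituting this identification and the identification $\pi(\P) \cong \pi_1(\Sq(\P),1)$ into the short exact sequence \eqref{pres_module_ses} of Proposition \ref{pres_module} produces precisely the sequence \eqref{Sq_ses}.

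The one point requiring care, and the only place the argument is not formal, is to verify that the $G$-module structure transported to $\pi_1(\Sq(\P),1)$ via $\phi$ agrees with its intrinsic $G$-module structure. Here $F(X)$ acts on $\st_1(\Sq(\P))$ by crossed-module conjugation $\alpha^q = q^{-1} \rhd \alpha \lhd q$; on elements of $\ker \R$ this action factors through $G = F(X)/N$ because conjugation by $r \hat{\rho}$ with $r \in \curlyR$ is inner in $(\st_1(\Sq(\P)),\ast)$ by the crossed-module axiom \eqref{cm2}, and hence trivial on the abelian kernel. Since $\phi$ is $F(X)$-equivariant by construction, the two induced $G$-module structures coincide, and the sequence \eqref{Sq_ses} is indeed a sequence of $G$-modules.
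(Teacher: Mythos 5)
Your proof is correct and follows essentially the same route as the paper: identify $\pi_1(\Sq(\P),1)$ with $\ker \R$, transport it to $\ker \partial = \pi(\P)$ via the crossed-module isomorphism of the preceding theorem, and substitute the identification $C(\P)^{ab} \cong \bigoplus_{r \in \curlyR} \Z G$ from Proposition \ref{free_cm_free_mod} into the sequence \eqref{pres_module_ses} of Proposition \ref{pres_module}. The paper compresses all of this into a single sentence; your extra check that the transported and intrinsic $G$--module structures on $\pi_1(\Sq(\P),1)$ agree is a detail the paper leaves implicit, and you handle it correctly.
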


\begin{ex}
Let $\P =\< x : xx^{-1}=1 \>$ presenting the infinite cyclic group $\< x \>$.  Then the relation modle is trivial,
and \eqref{Sq_ses} reduces to an isomorphism $\pi_1(\Sq(\P),1) \cong \Z \<x \>$.   We can also see this from
the construction of $\Sq(\P)$.  The Squier complex $\Sq(\P)$
has vertex set $\< x \>$ and each edge is a loop.  The generating set $S_1$ in Proposition \ref{lambda}
is
\[ S_1 = \{ (x^{-q},xx^{-1},1,x^q) : q \in \Z \} \]
and we write $\lambda_q = (x^{-q},xx^{-1},1,x^q)$.  By Proposition \ref{def_rels_lam_prop} we have a presentation
for  $\pi_1(\Sq(\P),1) = \st_1(\Sq(\P))$ given by
\[ \pi_1(\Sq(\P),1)  = \< \lambda_q \; (q \in \Z) : \lambda_{p+q} \ast \lambda_q = \lambda_q \ast
\lambda_{p+q} \; (p,q \in \Z) \> \]
and so $\pi_1(\Sq(\P),1)$ is free abelian of countably infinite rank, and the $\< x \>$--action is defined by
$\lambda_q^x = \lambda_{q+1}$.
\end{ex}

\subsection{Properties of $\pi_1(\Sq(\P),1)$}
We show in two Corollaries of Proposition \ref{Sq_rel_mod} how properties of the presentation $\P$ and the group $G$ are reflected in properties of the fundamental group of the reduced Squier complex.  The illustrative examples that we give 
are drawn from \cite{CrOt} and \cite{Dy}.

The first result was proved for the Squier complex of \cite{SOK} by Cremanns and Otto.  We refer to
\cite[Chapter VIII]{BrBook} and to \cite[section 4]{CrOt} for information on the condition $\text{FP}_3$

\begin{cor}{\cite[Theorem 4.10]{CrOt}}
Let $G$ be presented by the finite presentation $\P = \< X : \curlyR \>$.  Then the following are equivalent.
\leavevmode
\begin{enumerate}[(a)]
\item $\pi_1(\Sq(\P),1)$ is a finitely generated $G$--module,
\item $G$ is of type $\text{FP}_3$.
\end{enumerate}
\end{cor}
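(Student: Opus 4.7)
The plan is to combine the short exact sequence of Proposition \ref{Sq_rel_mod} with the low-dimensional portion of the standard free resolution of $\Z$ over $\Z G$ derived from the presentation $\P$, so that $\pi_1(\Sq(\P),1)$ is exhibited as a third syzygy module in a partial free resolution of $\Z$ by finitely generated free modules.  The corollary then reduces to the standard homological characterisation of $\text{FP}_n$ in terms of finite generation of syzygies.

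First I would recall that the presentation $\P = \< X : \curlyR \>$ yields two well-known short exact sequences of $\Z G$--modules,
\[ 0 \to IG \to \Z G \to \Z \to 0 \quad \text{and} \quad 0 \to N^{ab} \to \bigoplus_{x \in X} \Z G \to IG \to 0, \]
where $IG$ is the augmentation ideal and, in the second sequence, the basis element indexed by $x \in X$ is sent to $x-1$; compare \cite[Chapter II]{BrBook}.  Splicing these two sequences with \eqref{Sq_ses} produces a six-term exact sequence
\[ 0 \to \pi_1(\Sq(\P),1) \to \bigoplus_{r \in \curlyR} \Z G \to \bigoplus_{x \in X} \Z G \to \Z G \to \Z \to 0, \]
in which, because the sets $X$ and $\curlyR$ are finite, each of the three middle terms is a finitely generated free $\Z G$--module.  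Thus this is a partial free resolution of $\Z$ of length three by finitely generated free modules, and its kernel at the leftmost nontrivial term is precisely $\pi_1(\Sq(\P),1)$.

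To conclude, I would invoke the standard fact (see \cite[Chapter VIII]{BrBook} and the discussion in \cite[Section 4]{CrOt}) that a group $G$ is of type $\text{FP}_n$ if and only if, in any partial projective resolution $P_{n-1} \to \cdots \to P_0 \to \Z \to 0$ of $\Z$ with each $P_i$ a finitely generated projective $\Z G$--module, the kernel $\ker(P_{n-1} \to P_{n-2})$ is finitely generated as a $\Z G$--module.  Taking $n=3$ and the partial resolution constructed above, this kernel is exactly $\pi_1(\Sq(\P),1)$, and so (a) $\Leftrightarrow$ (b).

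The main obstacle, though essentially bookkeeping, lies in the appeal to this characterisation of $\text{FP}_n$: one must check that finite generation of the $(n{-}1)$-th kernel does not depend on the particular partial resolution chosen, which follows from the generalised Schanuel lemma applied to any two partial resolutions by finitely generated projectives.  Granting this standard input, the entire algebraic content of the corollary is already encoded in Proposition \ref{Sq_rel_mod}, and the proof amounts to splicing exact sequences.
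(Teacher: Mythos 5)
Your proposal is correct and follows essentially the same route as the paper: both splice the exact sequence $0 \ra N^{ab} \ra \bigoplus_{x \in X} \Z G \ra \Z G \ra \Z \ra 0$ with the sequence \eqref{Sq_ses} to realise $\pi_1(\Sq(\P),1)$ as the dimension-$2$ kernel in a partial free resolution of $\Z$ of finite type, and both settle the independence of the choice of resolution via the generalized Schanuel lemma. The only cosmetic difference is that you build the four-term sequence from the two standard short exact sequences involving the augmentation ideal, where the paper quotes it directly from \cite[Proposition II.5.4]{BrBook}.
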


\begin{proof}
There is an exact sequence of $G$--modules (see \cite[Proposition II.5.4]{BrBook}),
\[ 0 \ra N^{ab} \ra \bigoplus_{x \in X} \Z G \rightarrow \Z G \ra \Z \ra 0 \,, \]
and if $\pi_1(\Sq(\P),1)$ is a finitely generated as a $G$ module by a set $S$ this extends, using \eqref{Sq_ses}, 
to a partial free resolution of finite type
\[ \bigoplus_S \Z G \rightarrow \bigoplus_{\curlyR} \Z G \rightarrow \bigoplus_X \Z G \rightarrow \Z G \ra \Z \ra 0 \,,\]
which shows that $G$ has type $\text{FP}_3$.  Conversely, if $G$ has type $\text{FP}_3$ then
$\pi_1(\Sq(\P),1)$ is the kernel (at dimension $2$) in a partial free resolution of $\Z$ of finite type and so
is finitely generated as a consequence of the generalized Schanuel Lemma, see \cite[Proposition 4.3]{BrBook}.
\end{proof}

The second result characterizes the Cockcroft properties of $\P$.  
Following Dyer \cite[Theorem 4.2]{Dy} we make the following definition.  Let $L$ be a subgroup of $G$,
and apply the tensor product $- \otimes_{L} \Z$ to the sequence \eqref{Sq_ses} to obtain the sequence
\begin{equation} \label{Cft_ses}
\pi_1(\Sq(\P),1) \otimes_L \Z \rightarrow \bigoplus_{r \in \curlyR} \Z (G/L) \rightarrow N^{ab} \otimes_L \Z \rightarrow 0 \,,
\end{equation}
of abelian groups, where $G/L$ is the set of left cosets of $L$ in $G$.  Then $\P$ is \textit{$L$--Cockcroft} if the
map 
\[\bigoplus_{r \in \curlyR} \Z (G/L) \rightarrow N^{ab} \otimes_L \Z \]
in \eqref{Cft_ses} is an isomorphism.  Immediately from \eqref{Cft_ses} we obtain part of \cite[Theorem 4.2]{Dy}
in terms of $\pi_1(\Sq(\P),1)$.

\begin{cor}{\cite[Theorem 4.2]{Dy}}
Let $G$ be presented by the presentation $\P = \< X : \curlyR \>$ and let $L$ be a subgroup of $G$.
Then the following are equivalent:
\leavevmode
\begin{enumerate}[(a)]
\item $\P$ is $L$--Cockcroft,
\item the map 
$\pi_1(\Sq(\P),1) \otimes_L \Z \rightarrow \bigoplus_{r \in \curlyR} \Z (G/L)$
in \eqref{Cft_ses} is the zero map.
\end{enumerate}
\end{cor}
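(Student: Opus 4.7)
The plan is to read off the equivalence directly from right-exactness of the tensor product, applied to the short exact sequence of Proposition \ref{Sq_rel_mod}. The sequence \eqref{Cft_ses} is obtained by tensoring
\[ 0 \rightarrow \pi_1(\Sq(\P),1) \rightarrow \bigoplus_{r \in \curlyR} \Z G \rightarrow N^{ab} \rightarrow 0 \]
with $\Z$ over $\Z L$, and since $-\otimes_L \Z$ is right exact while $\Z G \otimes_L \Z \cong \Z(G/L)$, this yields the exact sequence
\[ \pi_1(\Sq(\P),1) \otimes_L \Z \xrightarrow{\,\alpha\,} \bigoplus_{r \in \curlyR} \Z (G/L) \xrightarrow{\,\beta\,} N^{ab} \otimes_L \Z \rightarrow 0 \,. \]
Thus $\beta$ is always surjective, and $\ker \beta = \im \alpha$.

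With this in hand, I would argue as follows. Condition (a) says by definition that $\beta$ is an isomorphism, which, given that $\beta$ is already surjective, is equivalent to $\beta$ being injective, i.e.\ $\ker \beta = 0$. By exactness this is the same as $\im \alpha = 0$, which is condition (b). Conversely, if $\alpha$ is the zero map then $\im \alpha = 0 = \ker \beta$, and combined with surjectivity this forces $\beta$ to be an isomorphism, recovering (a).

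There is essentially no obstacle: the content is entirely formal once one has Proposition \ref{Sq_rel_mod} and the identification $\Z G \otimes_L \Z \cong \Z (G/L)$ as (right) $L$-modules. The only thing worth double-checking is that the definition of $L$-Cockcroft in the paper is phrased in terms of \eqref{Cft_ses} rather than a different but equivalent formulation (for instance, via the second homology of a covering complex), so that no further translation is needed before invoking exactness.
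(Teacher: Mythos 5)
Your proposal is correct and is exactly the argument the paper intends: the paper simply declares the equivalence ``immediate from \eqref{Cft_ses}'', and your reasoning (right-exactness of $-\otimes_L \Z$ gives $\ker\beta = \im\alpha$ with $\beta$ surjective, so $\beta$ is an isomorphism iff $\alpha = 0$) is the spelled-out version of that. The definition of $L$--Cockcroft in the paper is indeed phrased directly in terms of the map in \eqref{Cft_ses} being an isomorphism, so no further translation is needed.
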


\end{document}